\definecolor{gr}{rgb}   {0.,   0.69,   0.23 }
\definecolor{bl}{rgb}   {0.,   0.5,   1. }
\definecolor{mg}{rgb}   {0.85,  0.,    0.85}
\definecolor{yl}{rgb}   {0.8,  0.7,   0.}
\definecolor{or}{rgb}  {0.7,0.2,0.2}
\newtheorem{theorem}{Theorem} [section]
\newtheorem{lemma}[theorem]{Lemma}
\newtheorem{proposition}[theorem]{Proposition}
\newtheorem{remark}[theorem]{Remark}
\newtheorem*{ackno}{Acknowledgements}
\DeclareMathOperator*{\supp}{supp}
\DeclareMathOperator{\Id}{Id}
\newcommand{\noi}{\noindent}
\newcommand{\Z}{\mathbb{Z}}
\newcommand{\R}{\mathbb{R}}
\newcommand{\T}{\mathbb{T}}
\newcommand{\E}{\mathbb{E}}
\newcommand{\F}{\mathcal{F}}
\newcommand{\al}{\alpha}
\newcommand{\dl}{\delta}
\newcommand{\nb}{\nabla}
\newcommand{\Dl}{\Delta}
\newcommand{\eps}{\varepsilon}
\newcommand{\kk}{\kappa}
\newcommand{\g}{\gamma}
\newcommand{\ld}{\lambda}
\newcommand{\s}{\sigma}
\newcommand{\ft}{\widehat}
\newcommand{\wt}{\widetilde}
\newcommand{\cj}{\overline}
\newcommand{\dt}{\partial_t}
\newcommand{\too}{\longrightarrow}
\newcommand{\ta}{\theta}
\renewcommand{\l}{\ell}
\renewcommand{\o}{\omega}
\renewcommand{\O}{\Omega}
\newcommand{\les}{\lesssim}
\newcommand{\ges}{\gtrsim}
\newcommand{\jb}[1]
{\langle #1 \rangle}
\newcommand{\ind}{\mathbf 1}
\newcommand{\N}{\mathbb{N}}
\numberwithin{equation}{section}
\numberwithin{theorem}{section}
\newcommand{\PP}{\mathbb{P}}
\newcommand{\z}{\zeta}
\DeclareMathOperator{\Law}{Law}
\newcommand{\W}{\mathcal{W}}
\newcommand{\dr}{\theta}
\newcommand{\Dr}{\Theta}
\newcommand{\Ha}{\mathbb{H}_a}
\newcommand{\proj}{\Pi}
\begin{document}
\baselineskip = 14pt

\title[Log-correlated Gibbs measures]
{Critical threshold for 
weakly interacting
log-correlated focusing Gibbs measures}

\author[D.~Greco, T.~Oh,  L.~Tao, and  L.~Tolomeo]
{Damiano Greco,  Tadahiro Oh,  Liying Tao, and Leonardo Tolomeo}

\address{
Damiano Greco\\School of Mathematics\\
The University of Edinburgh\\
and The Maxwell Institute for the Mathematical Sciences\\
James Clerk Maxwell Building\\
The King's Buildings\\
Peter Guthrie Tait Road\\
Edinburgh\\ 
EH9 3FD\\
 United Kingdom 
}

\email{dgreco@ed.ac.uk}


\address{
Tadahiro Oh, School of Mathematics\\
The University of Edinburgh\\
and The Maxwell Institute for the Mathematical Sciences\\
James Clerk Maxwell Building\\
The King's Buildings\\
Peter Guthrie Tait Road\\
Edinburgh\\ 
EH9 3FD\\
 United Kingdom, 
 and 
 School of Mathematics and Statistics, Beijing Institute of Technology,
Beijing 100081, China
}

\email{hiro.oh@ed.ac.uk}

\address{
Liying Tao, Graduate School of China Academy of Engineering Physics, Beijing, 100088, China}

\email{taoliying20@gscaep.ac.cn}

\address{
Leonardo Tolomeo,  School of Mathematics\\
The University of Edinburgh\\
and The Maxwell Institute for the Mathematical Sciences\\
James Clerk Maxwell Building\\
The King's Buildings\\
Peter Guthrie Tait Road\\
Edinburgh\\ 
EH9 3FD\\
United Kingdom}

\email{l.tolomeo@ed.ac.uk}

\subjclass[2020]{60H30, 81T08, 82B26, 35Q55}

\keywords{Gibbs measure;
 log-correlated Gaussian field; 
 phase transition}

\begin{abstract}

We study 
log-correlated 
Gibbs measures
on the $d$-dimensional torus
with weakly interacting focusing quartic potentials
whose coupling constants tend to $0$ as we remove regularization.
In particular, we exhibit a phase transition for this model 
by identifying
a critical threshold, separating the weakly and strongly coupling regimes;
in the weakly coupling regime, 
we show that the frequency-truncated measures converge to 
the base Gaussian measure (possibly with a renormalized $L^2$-cutoff), 
whereas, in the strongly coupling regime, 
we prove non-convergence of 
the frequency-truncated measures, even up to a subsequence.
Our result answers an open  question posed by 
Brydges and Slade (1996).

\end{abstract}


\maketitle

\vspace{-5mm}
%

\section{Log-correlated  Gibbs measures}
\label{SEC:1}

In this paper, we study 
the Gibbs measure 
$\rho$ 
on 
the $d$-dimensional torus on $\T^d = (\R/2\pi\Z)^d$, formally given by 
\begin{align}
d\rho(u) = Z^{-1} \exp \bigg(\ld \int_{\T^d} u^4 dx\bigg) d\mu(u), 
\label{Gibbs1}
\end{align}

\noi
where 
the coupling constant
$\ld > 0$ denotes the strength of 
the focusing (i.e.~attractive) interaction
and 
$\mu$ denotes the log-correlated Gaussian free field on $\T^d$, 
formally given by 
\begin{align}\label{gauss0}
d \mu 
= Z^{-1} e^{-\frac 12 \| u\|_{H^{d/ 2} }^2    } du.
\end{align}

\noi
In particular, our interest is to study the {\it weakly interacting} case
whose meaning we will make precise in the following;
see \eqref{GibbsN1}.

Let us first introduce some notations.
Recall that the Gaussian measure $\mu$ in \eqref{gauss0} 
corresponds to 
 the induced probability measure under the map:\footnote{We endow $\T^d$  with the normalized Lebesgue measure $dx_{\T^d}=(2\pi)^{-d}dx$. With a slight abuse of notation, we still use $dx$ to denote the normalized Lebesgue measure.}
\begin{align}
\o\in \O \longmapsto u(\o) = \sum_{n \in \Z^d } \frac{ g_n(\o)}{\jb{n}^\frac d2} e_n, 
\label{map}
\end{align}

%

\noi
where 
$\jb{\,\cdot\,} = (1+|\,\cdot\,|^2)^\frac{1}{2}$, 
$e_n=e^{i n\cdot x}$, 
and $\{ g_n \}_{n \in \Z^d}$
is a sequence of mutually independent standard complex-valued
Gaussian random variables on a probability space 
$(\O,\F,\PP)$
conditioned that  $g_{-n} = \cj{g_n}$, $n \in \Z^d$.  
It is easy to see from \eqref{map} that a typical element under $\mu$
is merely a distribution, 
thus requiring
 a renormalization 
 on the interaction potential $ \ld  \int_{\T^d} u^4 dx$.
 Given $N \in \N$,  we define the frequency projector
$\pi_N$ onto the frequencies $\{|n|\le N\}$ by setting
\begin{align}\label{pi1}
\pi_N f = 
\sum_{ |n| \leq N}  \ft f (n)  e_n.
\end{align}

\noi
Note that, for each fixed $x \in \T^d$, 
$\pi_N u(x)$ is
a mean-zero real-valued Gaussian random variable with variance:
\begin{align}\label{sigma1}
\s_N = \E\big[(\pi_N u)^2(x)\big] = \sum _{|n| \le N} \frac1{\jb{n}^d}
\sim  \log N \too \infty, 
\end{align}

\noi
as $N\to\infty$. 
We then define the Wick renormalized  power
$:\! (\pi_N u)^k\!:$
by setting
\begin{align}\label{Wick1}
:\! (\pi_N u)^k (x) \!: \, \stackrel{\text{def}}{=} H_k(\pi _N u(x); \s_N), 
\end{align}

\noi
where 
$H_k(x;\s)$ is the Hermite polynomial of degree $k$
with a variance parameter $\s$. 
For readers' convenience, we write out the first few Hermite polynomials:
\begin{align}
\begin{split}
& H_0(x; \s) = 1, 
\qquad 
H_1(x; \s) = x, 
\qquad
H_2(x; \s) = x^2 - \s,   \\
& H_3(x; \s) = x^3 - 3\s x, 
\qquad 
H_4(x; \s) = x^4 - 6\s x^2 +3\s^2.
\end{split}
\label{H1a}
\end{align}

\noi
See \cite{Kuo, OST}
for further discussions.
By setting
\begin{align}
R_N(u)= \int_{\T^d} :\!(\pi_N u)^4 \!:dx, 
\label{Wick0}
\end{align}

\noi
we define the truncated Gibbs measure $\rho_N$ by 
\begin{align}
d\rho_N(u) 
= Z_N^{-1} e^{\ld_N R_N(u)} d\mu(u)
= Z_N^{-1} \exp \bigg(\ld_N \int_{\T^d} :\!(\pi_N u)^4 \!:dx\bigg) d\mu(u).
\label{Gibbs1a}
\end{align}

\noi
A standard argument shows that 
$R_N(u)$ is Cauchy in $L^p(\mu)$
for any finite $p \ge 1$;
see Lemma~\ref{LEM:conv0}.
When $\ld_N \equiv \ld < 0$
(i.e.~the defocusing case),\footnote{Here, the notation ``$\ld_N \equiv \ld$''
means that the sequence $\{\ld_N\}_{N\in \N}$
is constant, taking the value $\ld$.}
Nelson's estimate allows
us to define the defocusing log-correlated Gibbs measure $\rho$
in \eqref{Gibbs1}
as the unique limit\footnote{When $d = 2$, this corresponds to the so-called $\Phi^4_2$-measure.}
 of the truncated Gibbs measures~$\rho_N$
 in \eqref{Gibbs1a};
see \cite{Nelson, OTh}.
On the other hand, when $\ld_N \equiv \ld > 0$
(i.e.~the focusing case), 
it is known that the Gibbs measure $\rho$
is not normalizable to be a probability measure
even with a renormalized $L^2$-cutoff;
see \cite{BS, OST}.\footnote{We note that the regularization  used in \cite{BS}
is different from the frequency truncation
and is based on the approximation $(1- \Dl + \eps \Dl^2)^{-1}$ (as $\eps \to 0$)
of the covariance operator $(1-\Dl)^{-1}$ of the Gaussian free field on $\T^2$.}

In \cite[p.\,489]{BS}, Brydges and Slade proposed to study 
the limiting behavior of  the following weakly interacting truncated  Gibbs measure 
$\rho_N$
with a renormalized $L^2$-cutoff:
\begin{align}\label{GibbsN1}
d\rho_N(u)=Z_{N}^{-1}
\ind_{\{|\int_{\T^d} \, : (\pi_N u)^2: \, dx| \, \leq K_N\}}  
e^{ \ld_N R_N (u)}d\mu(u),
\end{align}

\noi
where  $R_N(u)$ is as in \eqref{Wick0}, 
by taking $\ld_N \to 0 $
and $K_N \to \infty$ as $N \to \infty$, 
and investigate the existence of 
a ``critical point, separating the weak and strong coupling regimes''.
Our main result answers this question posed by Brydges and Slade.

\begin{theorem}\label{THM:1}
Let $\{\lambda_N\}_{N\in \N}$ be 
a non-increasing sequence of positive numbers
tending to $0$ as $N \to \infty$, 
and 
let $\{K_N\}_{N\in \N}$ be a non-decreasing sequence
of positive numbers.
Then, there exist $\ld^*\ge \ld_*>0$ such that the following statements hold\textup{:}

\smallskip
\noi
\textup{(i) (weakly coupling regime).}
Suppose that 
\begin{equation}
 \ld_N\le \ld_* (K_N+\log N)^{-1}
\label{K1}
\end{equation}

\noi
for any $N \in \N$.
Then, 
given any $p\geq 1$ we have
\begin{align}
    \sup_{N\in\N}
    \Big\|
    \ind_{\{|\int_{\T^d} \, : (\pi_N u)^2: \, dx| \, \leq K_N\}}  
    e^{ \ld_N R_N (u)}
    \Big\|_{L^p(\mu)}<\infty.
\label{exp1}
\end{align}

\noi
In particular, we have
\begin{align}
    \lim_{N\rightarrow\infty}
    \ind_{\{|\int_{\T^d} \, :(\pi_N u)^2: \, dx| \, \leq K_N\}}  
    e^{\ld_N  R_N (u)}
    =
\ind_{\{|\int_{\T^d} \, :u^2: \, dx| \, \leq K\}}  
\quad     \text{in}\ L^p(\mu), 
\label{exp2}
\end{align}

\noi
where 
$K = \lim_{N \to \infty}K_N \in (0, \infty]$.
Here, 
\[\int_{\T^d} :\!u^2\!:dx = \lim_{N \to \infty}
\int_{\T^d} :\!(\pi_N u)^2\!:dx,\]

\noi
where the limit is understood in 
$L^p(\mu)$ and $\mu$-almost surely
as in Lemma \ref{LEM:conv0}.
As a consequence, 
we have

\smallskip
\begin{itemize}
\item[(i.a)] 
 If $K  =  \lim_{N \to \infty}K_N  = \infty$, 
then the truncated Gibbs measure $\rho_N$
in \eqref{GibbsN1}
converges in total variation to the base Gaussian measure $\mu$ in \eqref{gauss0}
as $N \to \infty$.

\smallskip
\item[(i.b)]
 If $K = \lim_{N \to \infty}K_N < \infty$, 
then the truncated Gibbs measure $\rho_N$
in \eqref{GibbsN1}
converges in total variation to the base Gaussian measure
with a renormalized $L^2$-cutoff\textup{:}
\begin{align*}
\ind_{\{|\int_{\T^d} \, :u^2: \, dx| \, \leq K\}}  d\mu, 
\end{align*}

\noi
as $N \to \infty$.

\end{itemize}

\smallskip
\noi
\textup{(ii) (strongly coupling regime).}
Suppose that 
\begin{equation}\label{K2}
\ld_N\ge \ld^{*}(K_N+\log N)^{-1}
 \end{equation}

\noi
for any sufficiently large $N \gg 1$.
Then, we have 
\begin{align}\label{Gibbs3}
\sup_{N\in\N}Z_{N}:=
\sup_{N\in\N}\E_{\mu}
\Big[ \ind_{\{|\int_{\T^d} \, : (\pi_N u)^2: \, dx| \, \leq K_N\}}
e^{ \ld_N R_N (u)} \Big]
=\infty.
\end{align}

\noi
As a consequence, 
the truncated Gibbs measure $\rho_N$ in \eqref{GibbsN1}
does not converge   to any  limit in total variation, even up to a subsequence.

\end{theorem}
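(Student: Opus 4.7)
The plan is to use a variational representation (the Bou\'e--Dupuis formula) for the log-partition function:
\begin{equation*}
-\log\E_\mu\Big[\ind_{A_N}\, e^{\ld_N R_N(u)}\Big] = \inf_{\theta}\E\bigg[-\ld_N R_N(Y+I(\theta)) + \infty\cdot\ind_{(Y+I(\theta))\notin A_N} + \tfrac{1}{2}\int_0^1\|\theta(s)\|_{H^{d/2}}^2\,ds\bigg],
\end{equation*}
where $A_N = \{|\int_{\T^d}:\!(\pi_N u)^2\!:dx|\leq K_N\}$, $Y$ at time $1$ has law $\mu$, and $\Theta := I(\theta)$ is the associated Cameron--Martin drift. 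Parts (i) and (ii) then amount to bounding this infimum from below (uniformly in $N$) and from above (by $-\infty$), respectively.

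For Part (i), I would derive a pathwise estimate on $A_N$ after the shift $u=Y+\Theta$ of the form
\begin{equation*}
p\,\ld_N R_N(Y+\Theta) \leq \tfrac{1}{4}\|\Theta\|_{H^{d/2}}^2 + C + X_N(\omega),
\end{equation*}
with $X_N$ having $L^1(\O)$-norm uniform in $N$. The scale $K_N+\log N$ enters through: (a) the log-corrected Sobolev bound $\|\pi_N\Theta\|_{L^\infty}^2 \leq \s_N\|\Theta\|_{H^{d/2}}^2\les (\log N)\|\Theta\|_{H^{d/2}}^2$, inserted in $\int \Theta_N^4 \le \|\Theta_N\|_{L^\infty}^2\|\Theta\|_{L^2}^2$, and (b) the $L^2$-control $\|\Theta\|_{L^2}^2\les K_N + $ stochastic, extracted from $|\int :\!(Y+\Theta)^2\!:|\leq K_N$ via expansion and Cauchy--Schwarz. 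The stochastic cross-terms $\int :\!Y_N^k\!:\Theta_N^{4-k}$ for $1\leq k\leq 3$ are bounded using Gaussian hypercontractivity and absorbed into $X_N$. With $\ld_N\leq \ld_*(K_N+\log N)^{-1}$ and $\ld_*$ small, Young's inequality absorbs the deterministic part of $\ld_N R_N$ into $\tfrac 14\|\Theta\|_{H^{d/2}}^2$, giving \eqref{exp1}. The convergence \eqref{exp2} then follows from dominated convergence: $\ld_N R_N\to 0$ in $\mu$-probability (by $\ld_N\to 0$ and Lemma~\ref{LEM:conv0}), while $\int:\!(\pi_N u)^2\!:dx\to \int:\!u^2\!:dx$ in $L^p(\mu)$ gives convergence of the indicators; statements (i.a) and (i.b) are then immediate.

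For Part (ii), I would exhibit a drift $\theta^*_N$ producing divergence of the objective. The construction reflects two mechanisms, depending on whether $K_N\gtrsim \log N$ or $\log N\gtrsim K_N$: (a) a localized bump $\Theta^* = h_N \psi(N(\,\cdot\, - x_0))$ with $h_N^2 N^{-d}\sim K_N$, for which $\int (\Theta^*)^4\sim K_N h_N^2$ and $\|\Theta^*\|_{H^{d/2}}^2\sim h_N^2$, so that the objective $\sim -(\ld_N K_N-\tfrac{1}{2})h_N^2\to -\infty$ when $\ld_N K_N \gtrsim 1$; and (b) a Green-function-type profile $\Theta^* = h_N G_N(\,\cdot\, - x_0)$ with $G_N = \sum_{|n|\leq N}\jb{n}^{-d}e_n$, for which $\|\Theta^*\|_{H^{d/2}}^2\sim h_N^2\s_N\sim h_N^2\log N$ and $\int(\Theta^*)^4\sim h_N^4$, so that saturating $h_N^2\sim K_N$ makes the objective $\to-\infty$ once $\ld_N\gtrsim \log N/K_N$. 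In either regime, $\ld_N \geq \ld^*(K_N+\log N)^{-1}$ with $\ld^*$ large suffices for \eqref{Gibbs3}. The non-convergence of $\rho_N$ in total variation then follows because, if a subsequence $\rho_{N_k}$ converged to a probability measure $\rho$, then along a further subsequence the densities $Z_{N_k}^{-1}\ind_{A_{N_k}} e^{\ld_{N_k} R_{N_k}}$ converge $\mu$-a.s.; but $Z_{N_k}\to\infty$ combined with $\ld_{N_k}\to 0$ and the $\mu$-a.s.\ convergence of $R_N$ forces the limit density to be $0$, contradicting that $\rho$ is a probability measure.

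The main technical obstacle is Part~(i): obtaining the pathwise inequality with the \emph{additive} combination $K_N+\log N$ rather than the product $K_N\log N$ given by a naive pairing of the $L^2$-cutoff with the $L^\infty$ log-Sobolev bound. Handling this requires a careful Littlewood--Paley decomposition of $\Theta$ together with the pointwise Wick identity $(:\!u^2\!:)^2 = :\!u^4\!: + 4\s_N:\!u^2\!: + 2\s_N^2$, which bootstraps the cutoff on $\int:\!u^2\!:$ into estimates on $\int:\!u^4\!:$, balanced against the Cameron--Martin cost $\tfrac 12\|\Theta\|_{H^{d/2}}^2$.
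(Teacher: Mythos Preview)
Your variational framework is correct, but there are genuine gaps in both parts.

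\textbf{Part (i).} The ``product versus sum'' obstacle you flag is self-inflicted: the $L^\infty$ log-Sobolev bound is suboptimal. The critical Gagliardo--Nirenberg inequality $\|\Theta_N\|_{L^4}^4\les \|\Theta_N\|_{L^2}^2\|\Theta_N\|_{H^{d/2}}^2$ (from $H^{d/4}\hookrightarrow L^4$ and interpolation) carries no $\log N$, so whenever the cutoff yields $\|\Theta_N\|_{L^2}^2\les K_N+\s_N$ one gets $\ld_N\|\Theta_N\|_{L^4}^4\les \ld_*(K_N+\log N)^{-1}(K_N+\log N)\|\Theta_N\|_{H^{d/2}}^2$ directly---no bootstrapping needed. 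The genuine difficulty lies in the complementary regime, where the cross-term $|\int Y_N\Theta_N\,dx|$ dominates $\|\Theta_N\|_{L^2}^2$ in the expansion of the cutoff and neither $K_N$ nor $\s_N$ is available. There the paper uses the logarithmic Young inequality $ab\le e^a+b\log b-b$ (trading $\|\Theta_N\|_{L^2}^2$ for $e^{c\|\Theta_N\|_{L^2}^2}$) combined with a Littlewood--Paley decomposition of $Y_N$---not of $\Theta$---with random cutoff $2^{dj_0/2}\sim 2+\|\Theta_N\|_{H^{d/2}}$, to obtain $\|\Theta_N\|_{L^2}^2\les \log(2+\|\Theta_N\|_{H^{d/2}})+\text{(stochastic with uniform exponential moments)}$. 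Your proposed Wick identity $(:\!u^2\!:)^2=:\!u^4\!:+4\s_N:\!u^2\!:+2\s_N^2$ only produces a \emph{lower} bound $R_N\ge -4\s_N K_N-2\s_N^2$ on the cutoff set (since $\int(:\!u^2\!:)^2\ge 0$ is uncontrolled from above), which is the wrong direction for \eqref{exp1}.

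\textbf{Part (ii).} Your bump construction (a) for $K_N\gtrsim\log N$ is essentially the paper's Case~2 and is correct. But construction (b) fails when $K_N\lesssim\log N$: saturating $h_N^2\sim K_N$ with the Green profile gives objective $\sim -\ld_N K_N^2+K_N\log N$, so divergence requires $\ld_N\gtrsim(\log N)/K_N\ge 1$, contradicting $\ld_N\to 0$. The missing idea is that in this regime a \emph{stochastic} drift is needed. The paper takes $\Theta^0=-\z_M+\sqrt{\al_{M,N}}\,f_M$ with $\z_M$ an adapted approximation to $Y_M$ (from \cite{OST}) and $\al_{M,N}\sim\log M$. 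The cancellation $Y_N-\z_M$ on low frequencies keeps the Wick-ordered cutoff $|\int:\!(Y_N+\Theta^0)^2\!:\,dx|\le K_N$ satisfied with probability $\ge\tfrac12$ even though $\|\Theta^0\|_{L^2}^2\sim\log M\gg K_N$; one then reaps $Q(\Theta^0)\sim M^d(\log M)^2$ against drift cost $\sim M^d\log M$, and $M=N$ with $\ld_N\gtrsim\ld^*(\log N)^{-1}$ gives divergence for $\ld^*$ large. A deterministic $\Theta$ cannot access $\|\Theta\|_{L^2}^2\sim\log N$ here, because $\int:\!(Y_N+\Theta)^2\!:\,dx=\int:\!Y_N^2\!:\,dx+2\int Y_N\Theta\,dx+\|\Theta\|_{L^2}^2$ has the last term undiminished while the first two are $O_\PP(1+\|\Theta\|_{L^2})$, forcing $\|\Theta\|_{L^2}^2\les K_N$ for the cutoff to hold with positive probability.
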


  Theorem \ref{THM:1} in particular states
  that the weakly interacting focusing log-correlated Gibbs measure 
is trivial in the sense that, as we remove the regularization, 
we either obtain
the base Gaussian measure (possibly with a renormalized $L^2$-cutoff)
or non-normalizability\,/\,non-convergence.  
  
In a seminal work \cite{LRS}, Lebowitz, Rose, and Speer
initiated the study of focusing Gibbs measures;\footnote{More precisely, focusing $\Phi^k_d$-measures, 
where the base Gaussian measure has covariance $(1-\Dl)^{-1}$.}
see also 
\cite{BO94,BS,  CFL}.
In a series of recent works \cite{OST2, OOT, OST, OOT2}, 
the second and fourth authors
with their collaborators
completed this research program on the (non-)construction 
of focusing Gibbs measures for any dimension and any power, 
in particular, 
by treating critical models, exhibiting delicate phase transitions,
when $d = 1$ and $d = 3$.
When $d = 2$, 
there is no such phase transition 
in  the case 
 $\ld_N \equiv \ld > 0$
in~\eqref{GibbsN1}.
Theorem~\ref{THM:1}
shows that 
the weakly interacting 
model \eqref{GibbsN1}
(proposed by Brydges and Slade~\cite{BS})
is critical when $\ld_N\sim (K_N+\log N)^{-1}$, 
nicely complementing
the critical models in 
$ d= 1, 3$
studied in  \cite{OST2, OOT, OOT2}.

\begin{remark}\rm 
(i) While  we stated our result
in the real-valued setting, 
a similar result holds in the  complex-valued setting
by replacing the Hermite polynomials with 
the Laguerre polynomial; see \cite{OTh}
for a further discussion.

\smallskip

\noi
(ii) Let $d = 2$.
Consider the following weakly interacting truncated
nonlinear Schr\"odinger equation on $\T^2$:
\begin{equation}
i\dt u_N + (1-\Dl)u_N - 4\ld_N \pi_N(| \pi_N u_N |^2\pi_N  u_N)=0
\label{NLS1}
\end{equation}

\noi
with the initial data distributed by $\rho_N$ in \eqref{GibbsN1}.
A standard argument shows that $\rho_N$ is an invariant measure
for \eqref{NLS1}.
Moreover, as a dynamical consequence of Theorem \ref{THM:1}
and~\cite{BO96}, 
we see that, as $N \to \infty$, 
the solution $u_N$ to
\eqref{NLS1} 
\begin{itemize}
\item
converges to the solution $u$ to the linear Schr\"odinger equation
$i\dt u + (1-\Dl)u=0$, 
with the initial data distributed by 
the log-correlated Gaussian measure $\mu$ in \eqref{gauss0}
(which is an invariant measure for the dynamics), 
if \eqref{K1} holds, 

\smallskip

\item 
does not converge to any meaningful limit if~\eqref{K2} holds.

\end{itemize}

\noi
See 
\cite[Subsection 1.2]{OST}
for other dynamical models related to the log-correlated Gibbs measures.

\end{remark}

\section{Preliminary lemmas}
\subsection{Deterministic estimates}

We first recall Young's inequality in the general setting;
see \cite[Theorem 156 on p.\,111]{HLP}.

\begin{lemma}\label{LEM:Young}
	Let $f$ be a strictly increasing function on $\R_+$
	such that   $f(0) = 0$ and its inverse $f^{-1}$ is also strictly increasing.
	Then, for any $a, b \ge 0$, we have
	\begin{align}
		ab \le \int_0^a f(x) dx   +  \int_0^b f^{-1}(x) dx 
		\label{Young1}
	\end{align}
	
	\noi
	with equality if and only if $b = f(a)$.
	In particular, applying \eqref{Young1}
	to  $f(x) = e^x - 1$ and $f^{-1}(x) = \log(1+x)$
	\textup{(}with $b$ replaced by $b-1$\textup{)}, 
	we have 
	\begin{align}
		ab \le e^{a} + b \log b - b
		\label{Young2}
	\end{align}
	
	\noi
	for any $a \ge  0$ and $b \ge 1$.
	
\end{lemma}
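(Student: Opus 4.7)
The plan is to first prove the general Young-type inequality \eqref{Young1} by a short one-variable calculus argument, and then derive the specialization \eqref{Young2} by direct substitution. Both steps are classical and there is no essential obstacle; I will handle them in this order.

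For \eqref{Young1}, I fix $a \ge 0$ and view $G(b) := \int_0^a f(x)\,dx + \int_0^b f^{-1}(y)\,dy - ab$ as a function of $b \ge 0$. The hypotheses that $f$ is strictly increasing on $\R_+$ with a strictly increasing inverse force $f$ to be a continuous strictly increasing bijection on $\R_+$ (otherwise $f^{-1}$ would fail to be strictly increasing throughout $\R_+$). Consequently, the fundamental theorem of calculus gives $G'(b) = f^{-1}(b) - a$, which vanishes precisely at $b^* = f(a)$ and changes sign from negative to positive there. Hence $G$ attains a strict global minimum on $\R_+$ at $b^*=f(a)$.

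It remains to identify $G(f(a))$ with $0$. For this, I use the substitution $y = f(x)$ in the second integral and then integrate by parts, exploiting $f(0)=0$, to obtain
\[
\int_0^{f(a)} f^{-1}(y)\,dy = \int_0^a x\, d(f(x)) = a\, f(a) - \int_0^a f(x)\,dx.
\]
Plugging back into $G(f(a))$ produces a cancellation that yields exactly $0$, which combined with the strict minimization above gives \eqref{Young1} together with the stated equality condition $b = f(a)$.

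For \eqref{Young2}, I apply \eqref{Young1} with $f(x) = e^x - 1$ (so that $f^{-1}(y) = \log(1+y)$), after replacing $b$ by $b - 1 \ge 0$. The two antiderivatives are elementary, giving $\int_0^a (e^x - 1)\,dx = e^a - 1 - a$ and, via the substitution $u = 1 + y$, $\int_0^{b-1}\log(1+y)\,dy = b\log b - b + 1$. Adding these and comparing with $a(b-1) = ab - a$ produces $ab \le e^a + b\log b - b$ after the linear terms and constants cancel, as claimed. The only mild subtlety in the whole argument is regularity of $f$ for the integration-by-parts step; if $f$ is not assumed differentiable, one can instead use the Stieltjes form of integration by parts, or the equivalent geometric observation that the regions $\{(x,y) : 0 \le x \le a,\ 0 \le y \le f(x)\}$ and $\{(x,y) : 0 \le y \le b,\ 0 \le x \le f^{-1}(y)\}$ always cover the rectangle $[0,a]\times[0,b]$, and do so with no overflow exactly when $b = f(a)$.
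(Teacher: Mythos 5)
Your proof is correct. The paper itself gives no argument for this lemma---it simply cites Hardy--Littlewood--P\'olya (Theorem 156)---so there is no internal proof to compare against. Your one-variable minimization of $G(b) = \int_0^a f + \int_0^b f^{-1} - ab$ (noting $G'(b) = f^{-1}(b) - a$ is strictly increasing, hence a unique global minimum at $b = f(a)$), together with the change-of-variable and integration-by-parts evaluation $G(f(a)) = 0$, is a standard and complete derivation of \eqref{Young1} and of the equality characterization. You are also right to flag the regularity issue: the hypothesis that $f^{-1}$ is a strictly increasing function on $\R_+$ forces $f$ to be a continuous increasing bijection of $\R_+$ onto itself, and for a merely continuous (not a.e.\ differentiable in the convenient sense) $f$ the Riemann--Stieltjes integration-by-parts identity $\int_0^{f(a)} f^{-1}(y)\,dy = af(a) - \int_0^a f(x)\,dx$, or equivalently the geometric covering of the rectangle $[0,a]\times[0,b]$ by the two hypograph/epigraph regions, closes that gap; the latter is in fact essentially the classical HLP proof. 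The specialization is also computed correctly: with $f(x)=e^x-1$, $f^{-1}(y)=\log(1+y)$, and $b$ replaced by $b-1\ge 0$, the antiderivatives $e^a-1-a$ and $b\log b-b+1$ combine with $a(b-1)=ab-a$ to yield \eqref{Young2} after the linear and constant terms cancel.
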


\subsection{Tools from stochastic analysis}

In this subsection, we state several useful lemmas
from stochastic analysis.
We first state  the Wiener chaos estimate
(\cite[Theorem~I.22]{Simon});
see \cite[Lemma 3.2]{OTz1}
for the following particular version.

\begin{lemma}\label{LEM:hyp}
Let $\left\{g_n\right\}_{n\in \Z^d}$ be a sequence of independent standard real-valued Gaussian
random variables. Given $k\in \N$, let $\left\{P_j\right\}_{j\in \N}$ be a sequence of polynomials in 
$\bar{g}=\left\{g_n\right\}_{n\in \Z^d}$ of degree at most $k$. Then, for any finite $p\ge 1$, we have
\begin{align*}
\bigg\|\sum_{j\in \N}P_j(\bar{g})\bigg\|_{L^p(\Omega)}
\le (p-1)^{\frac{k}{2}}\bigg\|\sum_{j\in \N}P_j(\bar{g})\bigg\|_{L^2(\Omega)}.
\end{align*}
\end{lemma}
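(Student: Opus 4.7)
The plan is to derive this estimate as a direct consequence of Nelson's hypercontractivity for the Ornstein--Uhlenbeck semigroup on Gaussian space, which I treat as a black box (from Simon's book). Set $F := \sum_{j\in\N} P_j(\bar g)$; I may assume the series converges in $L^2(\O)$, otherwise the claim is vacuous. Since each $P_j$ is a polynomial of degree $\leq k$ in the independent Gaussians $\{g_n\}_{n\in\Z^d}$, the random variable $F$ lies in the closed linear span of Wiener chaoses of order $\leq k$. Orthogonally projecting onto the $j$-th homogeneous chaos (the $L^2$-closure of multivariate Hermite products of total degree $j$) produces the decomposition
\begin{align*}
F = \sum_{j=0}^{k} F_{(j)},
\qquad
\|F\|_{L^2(\O)}^2 = \sum_{j=0}^{k} \|F_{(j)}\|_{L^2(\O)}^2.
\end{align*}

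Next I invoke two structural properties of the Ornstein--Uhlenbeck semigroup $\{T_t\}_{t\geq 0}$ on $L^2(\O)$: (a) each $j$-th chaos is an eigenspace, $T_t F_{(j)} = e^{-jt}F_{(j)}$; and (b) Nelson's hypercontractivity, $\|T_t H\|_{L^p(\O)} \leq \|H\|_{L^2(\O)}$ whenever $e^{2t}\geq p-1$. The range $p \in [1,2]$ is handled by Jensen's inequality (and in any event only the regime $p \geq 2$ enters the applications of Nelson's estimate in this paper), so I focus on $p\geq 2$ and set $t_p := \tfrac{1}{2}\log(p-1) \geq 0$.

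Defining $G := \sum_{j=0}^{k} e^{j t_p}\, F_{(j)}$, the spectral property (a) yields $T_{t_p} G = F$. Combining hypercontractivity, orthogonality of the chaoses, and the trivial bound $e^{2j t_p} = (p-1)^j \leq (p-1)^k$ for $0\leq j \leq k$, I obtain
\begin{align*}
\|F\|_{L^p(\O)}^2
= \|T_{t_p} G\|_{L^p(\O)}^2
\leq \|G\|_{L^2(\O)}^2
= \sum_{j=0}^{k} (p-1)^{j}\, \|F_{(j)}\|_{L^2(\O)}^2
\leq (p-1)^k\, \|F\|_{L^2(\O)}^2,
\end{align*}
which is the claimed inequality after taking square roots. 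The main obstacle is essentially bookkeeping: verifying orthogonality of the Wiener chaoses across the countable index set $\{g_n\}_{n\in\Z^d}$ (a standard consequence of Wick's theorem for Gaussian moments) and justifying that $F$ sits in the sum of the first $k$ chaoses. All of the substantive analytic content is hidden in Nelson's hypercontractivity, whose proof proceeds via the two-point inequality on $\{\pm 1\}$-Bernoullis combined with the central limit theorem; in this paper it is simply cited rather than reproved.
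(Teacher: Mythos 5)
Your proposal is correct and reproduces the standard derivation of the Wiener chaos estimate that the cited references (Simon, Theorem I.22; Oh--Tzvetkov, Lemma~3.2) use: Wiener chaos decomposition, the Ornstein--Uhlenbeck eigenvalue relation $T_t F_{(j)} = e^{-jt}F_{(j)}$, and Nelson's hypercontractivity $\|T_t H\|_{L^p}\le\|H\|_{L^2}$ for $e^{2t}\ge p-1$, with $t_p=\tfrac12\log(p-1)$. The paper itself offers no proof of this lemma but simply cites it, so you are matching the intended route. One small point of hygiene: for $1\le p<2$ and $k\ge 1$ the displayed inequality is actually false as literally stated, since then $(p-1)^{k/2}<1$ while Jensen only gives the constant $1$ (take $F=g_0$, $k=1$, $p=1$ for a counterexample); your remark that Jensen ``handles'' that range is therefore slightly overstated, though you are right that only $p\ge 2$ is ever used, and the lemma is implicitly understood in that regime (or with the constant $\max(1,p-1)^{k/2}$).
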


Next, we recall the following orthogonality result
\cite[Lemma 1.1.1]{Nua}.

\begin{lemma}\label{LEM:ort}
Let $f$ and $g$ be jointly Gaussian random variables with mean zero 
and variances $\s_f$
and $\s_g$.
Then, we have 
\begin{align*}
\E\big[ H_k(f; \s_f) H_\l(g; \s_g)\big] = \dl_{k\l} k! \big\{\E[ f g] \big\}^k, 
\end{align*}

\noi
where
 $H_k (x,\s)$ denotes the Hermite polynomial of degree $k$ with a variance parameter $\s$.
\end{lemma}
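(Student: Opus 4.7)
The plan is to exploit the generating function identity
\begin{align*}
\sum_{k\ge 0} \frac{t^k}{k!} H_k(x;\sigma) = \exp\Big(tx - \frac{t^2\sigma}{2}\Big),
\end{align*}
which characterizes the Hermite polynomials with variance parameter, and to extract both the orthogonality (for $k \ne \ell$) and the value of $\E[H_k(f;\sigma_f) H_k(g;\sigma_g)]$ at once from a \emph{double} generating function in the variables $(s,t)$. This way I avoid any combinatorial manipulation of the polynomial coefficients and reduce the entire statement to the moment generating function of a jointly Gaussian pair.

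First, I would introduce
\begin{align*}
F(s,t) := \sum_{k,\ell \ge 0} \frac{s^k t^\ell}{k!\,\ell!}\, \E\big[H_k(f;\sigma_f) H_\ell(g;\sigma_g)\big]
\end{align*}
and use Fubini (justified below) together with the generating function identity to rewrite it as
\begin{align*}
F(s,t) = \E\Big[\exp\Big(sf - \tfrac{s^2\sigma_f}{2}\Big)\exp\Big(tg - \tfrac{t^2\sigma_g}{2}\Big)\Big].
\end{align*}
Since $(f,g)$ is jointly Gaussian with mean zero, $sf + tg$ is a one-dimensional Gaussian with variance $s^2\sigma_f + 2st\,\E[fg] + t^2\sigma_g$, and the standard moment generating function formula yields
\begin{align*}
\E[\exp(sf+tg)] = \exp\Big(\tfrac{1}{2}s^2\sigma_f + st\,\E[fg] + \tfrac{1}{2}t^2\sigma_g\Big).
\end{align*}
Substituting, the quadratic terms $s^2\sigma_f/2$ and $t^2\sigma_g/2$ cancel, leaving
\begin{align*}
F(s,t) = \exp(st\,\E[fg]) = \sum_{k\ge 0} \frac{s^k t^k}{k!}\big\{\E[fg]\big\}^k.
\end{align*}

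Finally, I would match coefficients of $s^k t^\ell$ in the two power-series expansions of $F$. Coefficients with $k \ne \ell$ are absent from the right-hand side, yielding $\E[H_k(f;\sigma_f) H_\ell(g;\sigma_g)] = 0$, while matching the coefficient of $s^k t^k$ gives $\frac{1}{(k!)^2}\E[H_k(f;\sigma_f) H_k(g;\sigma_g)] = \frac{1}{k!}\{\E[fg]\}^k$, which rearranges to the desired identity. The only delicate point is the swap of expectation and summation used to obtain the expression for $F$, together with the uniqueness of the Taylor expansion at the origin; both follow from the Gaussian integrability bound $\E[\exp(c|f| + c|g|)] < \infty$ for every $c>0$, which supplies a dominating function on each bounded region of $(s,t)$ and shows that $F$ is an entire function whose Taylor series at the origin is unique. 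In short, no substantive obstacle arises, since the lemma is essentially a direct consequence of the Wick-exponential generating function together with the Gaussian MGF.
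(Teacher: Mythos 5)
Your proof is correct, and it is the standard generating-function argument—indeed essentially the one given in the reference the paper cites for this lemma (Nualart, Lemma~1.1.1): form the double exponential generating function, evaluate it via the Gaussian moment generating function of $sf+tg$, observe the quadratic corrections cancel, and match Taylor coefficients of $\exp(st\,\E[fg])$. The only point that deserves one more line is the Fubini step: the bound $\E[\exp(c|f|+c|g|)]<\infty$ does not by itself dominate the double series, since one also needs a growth estimate on $H_k$; a Cauchy estimate on the generating function gives $|H_k(x;\sigma)|\le C\,k!\,r^{-k}e^{r|x|+r^2\sigma/2}$, after which absolute convergence of $\sum_{k,\ell}\frac{|s|^k|t|^\ell}{k!\,\ell!}\,\E\big[|H_k(f;\sigma_f)H_\ell(g;\sigma_g)|\big]$ for $|s|,|t|<r$ follows directly, justifying both the interchange and the uniqueness of the Taylor coefficients.
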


The following convergence result
follows from a standard computation, using Lemmas~\ref{LEM:hyp}
and~\ref{LEM:ort};
see, for example, \cite{OTh} for the proof
when $d = 2$, 
which can be easily generalized to any dimension $d \in \N$.

\begin{lemma}\label{LEM:conv0}
Let $k \in \N$.
Then, 
given any finite $p \ge 1$, 
$\int_{\T^d} :\!(\pi_N u)^k\!:dx$
converges to a unique limit, 
denoted by 
$\int_{\T^d} :\!u^k\!:dx$, 
in $L^p(\mu)$ and $\mu$-almost surely, as $N \to \infty$, 
where $\mu$ is as in~\eqref{gauss0}.
In particular, 
given a sequence  $\{\lambda_N\}_{N\in \N}$  of positive numbers
tending to $0$ as $N \to \infty$, 
$\ld_N R_N(u)$ converges to $0$
in $L^p(\mu)$ and $\mu$-almost surely, as $N \to \infty$, 
where
 $R_N(u)$ is as in  \eqref{Wick0}.

\end{lemma}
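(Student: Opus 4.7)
The plan is to set $X_N = \int_{\T^d} :\!(\pi_N u)^k\!: dx$ and proceed in three stages: first establish that $\{X_N\}$ is Cauchy in $L^2(\mu)$, then upgrade to $L^p(\mu)$ via the Wiener chaos estimate (Lemma~\ref{LEM:hyp}), and finally deduce $\mu$-almost sure convergence from a martingale argument.

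For the $L^2$ step, introduce the truncated log-correlated kernel
\[ G_N(z) = \sum_{|n|\le N} \jb{n}^{-d}\, e_n(z), \]
so that $\E\big[\pi_N u(x)\,\pi_M u(y)\big] = G_N(x-y)$ whenever $M \ge N$. Applying Lemma~\ref{LEM:ort} with $f = \pi_N u(x)$ and $g = \pi_M u(y)$ and integrating in $x, y$, translation invariance gives
\[ \E\big[X_N X_M\big] = k!\int_{\T^d} G_N(z)^k\,dz, \qquad \|X_M - X_N\|_{L^2(\mu)}^2 = k!\int_{\T^d} \big(G_M(z)^k - G_N(z)^k\big)\,dz. \]
The limiting kernel $G(z) = \sum_{n\in \Z^d} \jb{n}^{-d}\, e_n(z)$ is, modulo a smooth remainder, the kernel of $(1-\Dl)^{-d/2}$ on $\T^d$, and hence has only a logarithmic singularity at the origin, so $G \in L^p(\T^d)$ for every finite $p$. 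This yields $G_N \to G$ in $L^k(\T^d)$, so the sequence $\int G_N^k\,dz$ is Cauchy and hence $\{X_N\}$ is Cauchy in $L^2(\mu)$. Since $X_N$ lies in the $k$-th Wiener chaos, Lemma~\ref{LEM:hyp} promotes this to $L^p(\mu)$-Cauchyness for every finite $p \ge 1$, producing a limit that we denote by $\int_{\T^d} :\!u^k\!:dx$.

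For the $\mu$-almost sure statement, set $\F_N = \s(g_n : |n| \le N)$ and decompose $\pi_M u = \pi_N u + (\pi_M - \pi_N) u$ into independent Gaussian fields. The Hermite addition identity
\[ H_k(Y+Z;\s_Y+\s_Z) = \sum_{j=0}^k \binom{k}{j} H_j(Y;\s_Y)\,H_{k-j}(Z;\s_Z), \]
combined with $\E[H_{k-j}(Z;\s_Z)] = 0$ for $j < k$, yields $\E[X_M \mid \F_N] = X_N$, so $\{X_N\}_{N\in\N}$ is an $\F_N$-martingale. Its $L^2$-boundedness from the previous step, together with Doob's martingale convergence theorem, delivers the $\mu$-a.s.\ convergence to the same limit. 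The second assertion of the lemma (case $k=4$) is then immediate, since multiplying a sequence bounded in $L^p(\mu)$ and $\mu$-a.s.\ convergent by $\ld_N \to 0$ gives convergence to zero in both senses. The only nontrivial input is the $L^p(\T^d)$-control on $G$; this is classical in $d = 2$ and extends to general $d$ by Fourier/Poisson summation, noting that the symbol $\jb{n}^{-d}$ is precisely at the borderline where the kernel's singularity turns logarithmic.
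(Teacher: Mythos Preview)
Your proposal is correct and follows precisely the route the paper alludes to (the paper does not prove this lemma but defers to \cite{OTh}, citing Lemmas~\ref{LEM:hyp} and~\ref{LEM:ort} as the only inputs); the covariance computation via Lemma~\ref{LEM:ort}, the hypercontractive upgrade via Lemma~\ref{LEM:hyp}, and the martingale argument for $\mu$-almost sure convergence are all standard and sound.

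One small step deserves more care: the implication ``$G \in L^p(\T^d)$ for every $p$, hence $G_N \to G$ in $L^k(\T^d)$'' is not automatic when $d \ge 2$, since $\pi_N$ is the sharp ball multiplier, which by Fefferman's theorem is unbounded on $L^p$ for $p \ne 2$. The cleanest fix is to bypass $L^k$-convergence of $G_N$ altogether: expanding on the Fourier side,
\[
\int_{\T^d} G_N(z)^k\,dz \;=\; \sum_{\substack{n_1+\cdots+n_k=0\\ |n_j|\le N}} \prod_{j=1}^k \jb{n_j}^{-d}
\]
is a monotone sequence of partial sums of a nonnegative series whose total is finite (either via $\int_{\T^d}|G|^k<\infty$ from the logarithmic singularity, or directly by iterating the discrete convolution bound $\sum_{n}\jb{n}^{-d}\jb{m-n}^{-d}\lesssim \jb{m}^{-d}\log\jb{m}$). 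This gives the Cauchy property of $\{X_N\}$ in $L^2(\mu)$ immediately, and the rest of your argument goes through unchanged.
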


The next lemma plays an important role
in studying convergence
of the indicator function $\ind_{\{|\int_{\T^d} \, :u_N^2: \, dx| \, \leq K_N\}} $;
see   
\cite[Lemma 2.4]{OST}
for the proof.
See also \cite[Remark 5.12]{CLO}.

\begin{lemma}\label{LEM:BM}
Let $\mu$ be as in \eqref{gauss0}.
Then, we have 
\begin{align*}
\mu \bigg(\int_{\T^d}:\!  u^2    \!: dx = K\bigg) = 0
\end{align*}

\noi
for any $K \in \R$, 
where $\int_{\T^d} :\!u^2\!: dx$ is the limit of 
$\int_{\T^d} :\!(\pi_N u)^2\!:dx$
as $N \to \infty$.

\end{lemma}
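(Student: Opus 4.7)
The plan is to expand $X:=\int_{\T^d}:\!u^2\!: dx$ as an explicit series in the independent Fourier modes $\{g_n\}_{n\in\Z^d}$, single out one Gaussian coefficient on which $X$ depends affinely in a nondegenerate way, and then condition on all remaining coefficients to reduce the claim to the non-atomicity of a one-dimensional chi-squared distribution.

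Combining the Fourier expansion \eqref{map} with the reality constraint $g_{-n}=\cj{g_n}$, the orthogonality $\int_{\T^d} e_{n+m}\,dx=\dl_{n+m,0}$, and the identity $g_n g_{-n}=|g_n|^2$, together with $\s_N=\sum_{|n|\le N}\jb{n}^{-d}=\sum_{|n|\le N}\E[|g_n|^2]\jb{n}^{-d}$, a direct computation yields
\begin{equation*}
\int_{\T^d}:\!(\pi_N u)^2\!: dx \,=\, \sum_{|n|\le N}\frac{|g_n|^2-1}{\jb{n}^d}.
\end{equation*}
By Lemma~\ref{LEM:conv0}, this converges $\mu$-almost surely and in $L^p(\mu)$ for every finite $p\ge 1$ to $X$. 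Isolating the zero mode, I would write $X=(g_0^2-1)+Y$, where $Y:=\sum_{n\ne 0}(|g_n|^2-1)\jb{n}^{-d}$ converges $\mu$-a.s.\ and in $L^2(\mu)$ by the same $L^2$-orthogonality argument underlying Lemma~\ref{LEM:conv0}. In particular, $Y$ is measurable with respect to the sub-$\sigma$-algebra $\GG:=\sigma\{g_n:n\ne 0\}$, whereas $g_0$ is a real-valued standard Gaussian independent of $\GG$.

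To conclude, condition on $\GG$: for any fixed $K\in\R$,
\begin{equation*}
\mu\bigl(X=K\,\big|\,\GG\bigr) \,=\, \mu\bigl(g_0^2 = K + 1 - Y\,\big|\,\GG\bigr) \,=\, 0 \quad\mu\text{-a.s.},
\end{equation*}
since, conditionally on $\GG$, the middle quantity is the probability that the $\chi^2_1$-distributed random variable $g_0^2$ takes the deterministic value $K+1-Y$, and the $\chi^2_1$-law is absolutely continuous on $[0,\infty)$ (so assigns mass $0$ to every point of $\R$). Taking expectations and applying the tower property yields $\mu(X=K)=0$, as claimed. The argument is essentially bookkeeping and has no substantial analytic obstacle; the only delicate point is the $\GG$-measurability of $Y$, which is immediate from the termwise Fourier identification above. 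Equivalently, one could isolate any nonzero mode $n_0$ and use that $|g_{n_0}|^2$ is exponentially distributed.
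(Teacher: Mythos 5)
Your proof is correct, and it is essentially the same argument as the one the paper delegates to \cite[Lemma 2.4]{OST}: after the Fourier-mode identification $\int_{\T^d}:\!(\pi_N u)^2\!:dx = \sum_{|n|\le N}\jb{n}^{-d}(|g_n|^2-1)$, one isolates a single mode (you use $n=0$, where $g_0^2\sim\chi^2_1$; the cited proof isolates a mode in essentially the same spirit) and exploits independence together with absolute continuity of the isolated one-dimensional law, then integrates out via the tower property. The bookkeeping on $\GG$-measurability of $Y$ (take the $\limsup$ of the $\GG$-measurable partial sums to get a $\GG$-measurable version) is the only point requiring a word, and you flag it appropriately.
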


\subsection{Variational formulation}

We prove Theorem \ref{THM:1}, 
using a variational formula 
for the partition function, recently popularized 
in a  seminal work \cite{BG}
by Barashkov and Gubinelli;
 see also \cite{OOT, OST, OOT2, TW}.
First, we introduce some notations. 
Fix a  probability space $(\O, \F, \mathbb P)$.
Let $W(t)$ be a cylindrical Brownian motion in $L^2(\T^d)$.
Namely, we have
\begin{align*}
W(t) = \sum_{n \in \Z^d} B_n(t) e_n,
\end{align*}

\noi
where  
$\{B_n\}_{n \in \Z^d}$ is a sequence of mutually independent complex-valued\footnote{By convention, we normalize $B_n$ such that $\text{Var}(B_n(t)) = t$. In particular, $B_0$ is  a standard real-valued Brownian motion.} Brownian motions conditioned  that 
$\cj{B_n}= B_{-n}$, $n \in \Z^d$. 
Then, we define a centered Gaussian process $Y(t)$
by 
\begin{align}
Y(t)
=  \jb{\nabla}^{-\frac d2}W(t).
\label{P2}
\end{align}

\noi
In the following, we use the shorthand notation:  $Y = Y(1)$.
Then, 
we have $\Law(Y) = \mu$, 
where $\mu$ is the log-correlated Gaussian measure defined in~\eqref{gauss0}.
Given $N \in \N$, 
we set   $Y_N = \pi_NY $.
such that 
 $\Law(Y_N) = (\pi_N)_*\mu$, 
i.e.~the pushforward of $\mu$ under 
the frequency projector $\pi_N$ in~\eqref{pi1}.

Next, let $\Ha$ denote the space of drifts, 
which are progressively measurable processes 
belonging to 
$L^2([0,1]; L^2(\T^d))$, $\PP$-almost surely. 
Then,  the  Bou\'e-Dupuis variational formula \cite{BD, Ust}
reads as follow; see \cite{Zhang} and \cite[Appendix A]{TW}
for infinite-dimensional versions.

\begin{lemma}\label{LEM:var}
Given $N \in \N$, let $Y_N$ be as above.
Suppose that  $F:C^\infty(\T^d) \to \R$
is measurable such that $\E\big[|F(Y_N)|^p\big] < \infty$
and $\E\big[|e^{F(Y_N)}|^q \big] < \infty$ for some $1 < p, q < \infty$ with $\frac 1p + \frac 1q = 1$.
Then, we have
\begin{align}
 \log \E\Big[e^{F(Y_N)}\Big]
= \sup_{\dr \in \mathbb H_a}
\E\bigg[ F( Y_N + \Dr_N) - \frac{1}{2} \int_0^1 \| \dr(t) \|_{L^2_x}^2 dt \bigg], 
\label{var1}
\end{align}

\noi
where  
 the expectation $\E = \E_\PP$
is taken  with respect to the underlying probability measure~$\PP$.
Here, 
$\Dr_N = \pi_N \Dr$, 
where the process $\Dr$  is  defined by 
\begin{align}
\Dr (t)  =  \int_0^t \jb{\nabla}^{-\frac d2} \dr(t') dt'.
\label{Dr1}
\end{align}

\noi
\end{lemma}

We conclude this section by stating basic lemmas
in applying the variational formula (Lemma~\ref{LEM:var});
see \cite[Lemmas 3.2 and  3.5]{OST}.

\begin{lemma}\label{LEM:Dr}
\textup{(i)} 
Let $\eps > 0$. Then, given any finite $p \ge 1$, 
we have 
\begin{align*}
\E 
\Big[  \|Y_N\|_{W^{-\eps,\infty}}^p
+ \|:\!Y_N^2\!:\|_{W^{-\eps,\infty}}^p
+ \| :\! Y_N^3 \!:  \|_{W^{-\eps,\infty}}^p
\Big]
\leq C_{\eps, p} <\infty,
\end{align*}

\noi
uniformly in $N \in \N$.
	
\smallskip
	
\noi
\textup{(ii)} For any $\dr \in \Ha$, we have
\begin{align*}
		\| \Dr \|_{H^{\frac d2}}^2 \leq \int_0^1 \| \dr(t) \|_{L^2}^2dt.
	\end{align*}
\end{lemma}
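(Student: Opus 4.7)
For part (i), the strategy is to reduce the $W^{-\eps,\infty}$ bound to a pointwise variance computation via Sobolev embedding, Minkowski, and hypercontractivity. Fix $k \in \{1,2,3\}$, $\eps > 0$, and choose a finite $q \ge 1$ with $q \ge 2d/\eps$, so that the Sobolev embedding $W^{-\eps/2, q}(\T^d) \hookrightarrow W^{-\eps, \infty}(\T^d)$ holds. For $p \ge q$, Minkowski's integral inequality yields
\begin{align*}
\big\|\, \| {:}Y_N^k{:}\|_{W^{-\eps, \infty}_x} \big\|_{L^p_\omega}
\lesssim \big\|\, \|\jb{\nabla}^{-\eps/2} {:}Y_N^k{:}(x)\|_{L^p_\omega}\,\big\|_{L^q_x}.
\end{align*}
Since $:\!Y_N^k\!:(x) = H_k(Y_N(x); \s_N)$ belongs to the $k$-th Wiener chaos, Lemma~\ref{LEM:hyp} reduces the inner norm to $L^2_\omega$ at the cost of a factor $(p-1)^{k/2}$. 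Translation invariance of $\mu$ forces $\E\bigl[|\jb{\nabla}^{-\eps/2} {:}Y_N^k{:}(x)|^2\bigr]$ to be constant in $x$, so it remains only to control this single number uniformly in $N$; the case $1 \le p < q$ then follows from Jensen's (or H\"older's) inequality.

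Expanding in a Fourier series, stationarity of $:\!Y_N^k\!:$ makes the cross terms $\E[\widehat{{:}Y_N^k{:}}(n_1)\, \overline{\widehat{{:}Y_N^k{:}}(n_2)}]$ vanish for $n_1 \ne n_2$, so
\begin{align*}
\E\bigl[|\jb{\nabla}^{-\eps/2} {:}Y_N^k{:}(x)|^2\bigr]
= \sum_{n \in \Z^d} \jb{n}^{-\eps}\, \E\bigl[|\widehat{{:}Y_N^k{:}}(n)|^2\bigr].
\end{align*}
The Fourier coefficient $\widehat{{:}Y_N^k{:}}(n)$ is a Wick-ordered homogeneous polynomial of degree $k$ in $\{g_m\}$, and Lemma~\ref{LEM:ort} (applied to its Hermite structure) together with the independence of the $g_m$ produces
\begin{align*}
\E\bigl[|\widehat{{:}Y_N^k{:}}(n)|^2\bigr] \le k!\, \sum_{\substack{n_1 + \cdots + n_k = n \\ |n_j| \le N}}\, \prod_{j=1}^{k} \jb{n_j}^{-d}.
\end{align*}
The right-hand side is a truncated $k$-fold convolution of $\jb{\cdot}^{-d}$ on $\Z^d$, which a standard dyadic decomposition bounds by $C_k \jb{n}^{-d}(\log\jb{n})^{k-1}$ uniformly in $N$. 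Summing, $\sum_n \jb{n}^{-d-\eps}(\log\jb{n})^{k-1} < \infty$ closes part~(i). This convolution estimate is the main technical point; the other steps are routine manipulations of Gaussian fields.

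Part (ii) is an immediate application of Cauchy--Schwarz. Setting $t=1$ in \eqref{Dr1} gives $\jb{\nabla}^{d/2} \Dr = \int_0^1 \dr(t)\, dt$, so
\begin{align*}
\|\Dr\|_{H^{d/2}}^2
= \int_{\T^d} \Big|\int_0^1 \dr(t, x)\, dt\Big|^2 dx
\le \int_0^1 \|\dr(t)\|_{L^2}^2\, dt
\end{align*}
by Cauchy--Schwarz in $t$ (using $\int_0^1 dt = 1$) followed by Fubini.
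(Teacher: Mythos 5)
Your proof is correct and follows the standard route (Sobolev embedding, Minkowski's integral inequality, hypercontractivity, and a variance computation via Wiener chaos orthogonality for part (i); Cauchy--Schwarz in $t$ for part (ii)), which is essentially the same argument used in the reference \cite{OST} that the paper cites for this lemma rather than reproving it. The only cosmetic points: the Sobolev embedding should take $q$ strictly larger than $2d/\eps$, and the bound for the truncated convolution is better written as $C_k\jb{n}^{-d}\bigl(1+\log\jb{n}\bigr)^{k-1}$ to be meaningful at small $n$; neither affects the conclusion.
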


\begin{lemma}\label{LEM:Dr1}
Given $N \in \N$, 
let $Y_N = \pi_N Y(1)$, 
where $Y$ is as in \eqref{P2}.
Then,  there exist small $\eps > 0$ and  a constant $c_0=c_0(\eps)>0$ 
	such that for any $\delta>0$, we have
\begin{align*}
		\bigg|\int_{\T^d}:\! Y_N^3\!:\Dr dx\bigg|
		&\le
		C(\delta)\|:\! Y_N^3\!:\|_{W^{-\eps,\infty}}^2
		+\delta\|\Dr\|_{H^{\frac d2}}^2,\\
		\bigg|\int_{\T^d}:\! Y_N^2\!:\Dr^2 dx\bigg|
		&\le
		C(\delta)\|:\! Y_N^2\!:\|_{W^{-\eps,\infty}}^4
		+\delta\Big(\|\Dr\|_{H^{\frac d2}}^2
		+\|\Dr\|_{L^4}^4\Big),\\
		\bigg|\int_{\T^d}Y_N\Dr^3 dx\bigg|
		&\le
		C(\delta)\|Y_N\|_{W^{-\eps,\infty}}^{c_0}
		+\delta
		\Big(\|\Dr\|^2_{H^{\frac d2}}
		+\|\Dr\|_{L^4}^4\Big),
	\end{align*}

	\noi
	uniformly in $N\in\N$.
\end{lemma}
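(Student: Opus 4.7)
The plan is to establish all three inequalities by a uniform three-step recipe. First, pair the Wick power $:\!Y_N^k\!:$ (or $Y_N$) against the remaining polynomial in $\Dr$ via the Sobolev duality $(W^{\eps,1}(\T^d))^*=W^{-\eps,\infty}(\T^d)$. Second, control the $W^{\eps,1}$-norm of the $\Dr$-polynomial by $\|\Dr\|_{H^{d/2}}$ and $\|\Dr\|_{L^4}$ using the Kato-Ponce fractional product rule together with Sobolev embeddings on $\T^d$, valid for $\eps>0$ sufficiently small (say $\eps<d/2$). Third, apply Young's inequality in two or three variables to redistribute the resulting product into $C(\delta)\|\,\cdot\,\|_{W^{-\eps,\infty}}^{c_0}$, $\delta\|\Dr\|_{H^{d/2}}^2$, and $\delta\|\Dr\|_{L^4}^4$. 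Finiteness of the various $Y_N$-norms at these Sobolev scales, uniformly in $N$, is already furnished by Lemma~\ref{LEM:Dr}.

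For the first inequality, using $L^2(\T^d)\hookrightarrow L^1(\T^d)$ by compactness, duality yields
$$\bigg|\int_{\T^d}:\!Y_N^3\!:\Dr\,dx\bigg|\le \|:\!Y_N^3\!:\|_{W^{-\eps,\infty}}\|\Dr\|_{W^{\eps,1}}\lesssim \|:\!Y_N^3\!:\|_{W^{-\eps,\infty}}\|\Dr\|_{H^{d/2}},$$
and a standard two-variable Young closes the bound. The second inequality follows analogously: duality reduces matters to bounding $\|\Dr^2\|_{W^{\eps,1}}$, and Kato-Ponce combined with $H^{d/2}(\T^d)\hookrightarrow L^p(\T^d)$ for any $p<\infty$ gives
$$\|\Dr^2\|_{W^{\eps,1}}\lesssim \|\Dr\|_{L^4}^2+\|\Dr\|_{H^{d/2}}\|\Dr\|_{L^4}.$$
Three-variable Young with exponents $(4,2,4)$ (noting $\tfrac14+\tfrac12+\tfrac14=1$) applied to $\|:\!Y_N^2\!:\|_{W^{-\eps,\infty}}\cdot\|\Dr\|_{H^{d/2}}\cdot\|\Dr\|_{L^4}$, together with two-variable Young on $\|:\!Y_N^2\!:\|_{W^{-\eps,\infty}}\|\Dr\|_{L^4}^2$, yields the claim.

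The third inequality is the most delicate, and this is where I expect the main obstacle. Duality and Kato-Ponce give
$$\bigg|\int_{\T^d}Y_N\Dr^3\,dx\bigg|\lesssim \|Y_N\|_{W^{-\eps,\infty}}\big(\|\Dr\|_{L^4}^2\|\Dr\|_{H^{\eps}}+\|\Dr\|_{L^4}^3\big).$$
The second summand is straightforward: Young with conjugate exponents $(4,4/3)$ yields $C(\delta)\|Y_N\|_{W^{-\eps,\infty}}^4+\delta\|\Dr\|_{L^4}^4$. For the first summand, a direct three-variable Young fails, since matching the target $\delta(\|\Dr\|_{H^{d/2}}^2+\|\Dr\|_{L^4}^4)$ forces the Young exponents on the $\Dr$-factors to sum to exactly $1$, leaving no room for the $Y_N$-factor. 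The workaround is the interpolation
$$\|\Dr\|_{H^{\eps}}\le \|\Dr\|_{L^2}^{1-2\eps/d}\|\Dr\|_{H^{d/2}}^{2\eps/d}\lesssim \|\Dr\|_{L^4}^{1-2\eps/d}\|\Dr\|_{H^{d/2}}^{2\eps/d},$$
using $L^4(\T^d)\hookrightarrow L^2(\T^d)$, which produces the refined bound $\|Y_N\|_{W^{-\eps,\infty}}\|\Dr\|_{L^4}^{3-2\eps/d}\|\Dr\|_{H^{d/2}}^{2\eps/d}$. Since $\tfrac{3-2\eps/d}{4}+\tfrac{2\eps/d}{2}=\tfrac34+\tfrac{\eps}{2d}<1$ for $\eps<d/2$, there is a strictly positive gap left over for the $Y_N$-factor, and three-variable Young now closes the estimate with $c_0=\tfrac{4d}{d-2\eps}$.
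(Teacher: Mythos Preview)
Your proof is correct and follows the standard route: duality pairing $W^{-\eps,\infty}$--$W^{\eps,1}$, fractional Leibniz to control $\|\Dr^k\|_{W^{\eps,1}}$ by $\|\Dr\|_{H^{d/2}}$ and $\|\Dr\|_{L^4}$, and Young's inequality (with the interpolation step $\|\Dr\|_{H^\eps}\le\|\Dr\|_{L^2}^{1-2\eps/d}\|\Dr\|_{H^{d/2}}^{2\eps/d}$ to create the needed gap in the third estimate). The paper does not reproduce an argument here but defers to \cite[Lemma~3.5]{OST}, where the proof proceeds along exactly these lines; your treatment, including the computation $c_0=\tfrac{4d}{d-2\eps}$, matches it.
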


\section{Weakly coupling regime: normalizability}\label{SEC:nor}

In this section, we present a proof of 
Theorem \ref{THM:1}\,(i). 
Let $K = \lim_{N \to \infty}K_N \in (0, \infty]$.
Then, it follows from 
Lemmas \ref{LEM:conv0}
and \ref{LEM:BM}
 that 
$\ind_{\{|\int_{\T^d} \, :(\pi_N u)^2: \, dx| \, \leq K_N\}}e^{\ld_N  R_N (u)}$
converges  
to 
$\ind_{\{|\int_{\T^d} \, :u^2: \, dx| \, \leq K\}}$
in   probability (with respect to  $\mu$) 
as $N \to \infty$.
Then, the desired $L^p(\mu)$-convergence~\eqref{exp2} follows
from the uniform integrability bound~\eqref{exp1};
see \cite[Remark 3.8]{Tz1}.
See also the discussion at the end of Section 2  in \cite{OTh}.

Given $N \in \N$, 
define
$ \W_N(\dr)$ by 
\begin{align*} 
    \W_N(\dr)=\E\bigg[
\ld_N    R_N(Y+\Dr )\cdot 
    \ind_{\{|\int_{\T^d} \, 
    :(Y_N+\Dr_N)^2: \, dx| \,
    \leq K_N\}}
    - \frac 12 \int_0^1 \| \dr(t)\|_{L^2_x} ^2 dt
    \bigg], 
\end{align*}

\noi
where $\Dr$ is as in \eqref{Dr1}.
Then, from 
Lemma \ref{LEM:var} with \eqref{P2}, 
we see that \eqref{exp1} follows once we prove 
\begin{align}\label{Y1}
\begin{split}
\sup_{N \in \N}    \log \E_{\mu}
    \bigg[\exp\Big(
\ld_N    R_N (u)\cdot\ind_{\{|\int_{\T^d} \, :(\pi_N u)^2: \, dx| \,
    \leq K_N\}}
    \Big)\bigg]
=\sup_{N \in \N} \sup_{\dr\in \mathbb H_a}
        \W_N(\dr) < \infty.
\end{split}
\end{align}

\noi
 From \eqref{Wick1} and  \eqref{Wick0}
 with \eqref{H1a} and the following identity
 (see \cite[(1.12)]{GKO}):
\begin{align*}
H_k(x+y; \s )
&  = 
\sum_{\l = 0}^k
\begin{pmatrix}
k \\ \l
\end{pmatrix}
 x^{k - \l} H_\l(y; \s), 
\end{align*}

\noi
we have 
\begin{align}
\begin{split}
\ld_N R_N(Y+\Dr)  & = 
		\ld_N\int_{\T^d}  :\! Y_N^4 \!:  dx
		+4\ld_N \int_{\T^d}  :\! Y_N^3 \!:  \Dr_N dx
		+6\ld_N \int_{\T^d}  :\! Y_N^2 \!:  \Dr_N^2 dx
		\\
		&\hphantom{X}
		+ 4\ld_N\int_{\T^d} Y_N  \Dr_N^3 dx
		+ \ld_N \int_{\T^d} \Dr_N^4 dx.
	\end{split}
	\label{Y0bis}
\end{align}	

\noi
By applying  Lemma \ref{LEM:Dr1} and  Lemma \ref{LEM:Dr}
to  \eqref{Y0bis}, we have 
\begin{align}
 \W_N(\dr)& \le 
 C_0 + \E\bigg[
2 \ld_N \| \Dr_N\|_{L^4}^4
\cdot\ind_{\{|\int_{\T^d} \, : (Y^2_N +\Dr_N)^2: \, dx| 
\leq K\}} -  \frac 1{4} \int_0^1 \| \dr(t)\|_{L^2_x} ^2 dt\bigg], 
\label{Y2}
\end{align}

\noi
uniformly in $N \in \N$ and $\ta \in \Ha$.	
Hence, 	
\eqref{Y1}
(and thus \eqref{exp1}) follows
from  \eqref{Y2}
once we prove the following proposition.

\begin{proposition}\label{PROP:H1}
	
There exists small $\ld_* > 0$ 
such that if
\eqref{K1} 
holds, 
then we have 
	\begin{align}
		\sup_{\dr\in \mathbb H_a}
		\E\bigg[
		\ld_N \| \Dr_N\|_{L^4}^4
		\cdot\ind_{\{|\int_{\T^d} \, : (Y^2_N +\Dr_N)^2: \, dx| 
			\leq K_N\}}
		-  \frac 1{10} \int_0^1 \| \dr(t)\|_{L^2_x} ^2 dt\bigg]
		\les 1, 
		\label{H1}
	\end{align}
	
\noi
uniformly in $N \in \N$.
	
\end{proposition}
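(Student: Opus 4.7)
The plan is to obtain a pointwise factorization $\ld_N\|\Dr_N\|_{L^4}^4 \le (\text{cutoff-controlled factor})\cdot(\text{penalty-controlled factor})$, then absorb each piece into the corresponding term in~\eqref{H1}. Write $Z := \int_0^1 \|\dr(t)\|_{L^2}^2\,dt$, so that $\|\Dr_N\|_{H^{d/2}}^2 \le Z$ by Lemma~\ref{LEM:Dr}\,(ii). The critical interpolation $L^4 = [L^2, H^{d/2}]_{1/2}$ on $\T^d$ (valid in every dimension since $L^4$ sits at the scaling of $H^{d/4}$, midway between $L^2$ and $H^{d/2}$) gives $\|f\|_{L^4}^2 \le C\|f\|_{L^2}\|f\|_{H^{d/2}}$, hence
\[ \|\Dr_N\|_{L^4}^4 \le C\|\Dr_N\|_{L^2}^2\, Z . \]

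Next, I would expand the Wick square,
\[ \int_{\T^d}:\!(Y_N+\Dr_N)^2\!:dx = \int_{\T^d}:\!Y_N^2\!:dx + 2\int_{\T^d}Y_N\Dr_N\,dx + \|\Dr_N\|_{L^2}^2 , \]
so that on the cutoff event $\|\Dr_N\|_{L^2}^2 \le K_N + |\!\int :\!Y_N^2\!: dx| + 2|\!\int Y_N\Dr_N\,dx|$. Bounding the cross term via Cauchy--Schwarz, combined with $\|Y_N\|_{L^2}^2 = \s_N + \int :\!Y_N^2\!: dx$ and $\s_N \sim \log N$, Young's inequality reabsorbs a fraction of $\|\Dr_N\|_{L^2}^2$ to yield, on the cutoff event,
\[ \|\Dr_N\|_{L^2}^2 \le C(K_N + \log N + \RR_N), \qquad \RR_N := \Big|\int_{\T^d}:\!Y_N^2\!:dx\Big| , \]
where $\RR_N$ has uniformly bounded exponential moments via the hypercontractivity of the second Wiener chaos (Lemmas~\ref{LEM:hyp} and~\ref{LEM:conv0}).

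Combining the two inequalities and invoking~\eqref{K1} then gives
\[ \ld_N\|\Dr_N\|_{L^4}^4\,\ind_{\{|\int :(Y_N+\Dr_N)^2:|\le K_N\}} \le C\ld_* Z + C\ld_N \RR_N Z . \]
Choosing $\ld_*$ small enough that $C\ld_* \le 1/20$ absorbs the first piece into the penalty $-\tfrac{1}{10}Z$ in~\eqref{H1}. For the residual $\ld_N\RR_N Z$, I would apply Lemma~\ref{LEM:Young} in the form $ab \le e^a + b\log b - b$, with $a = t\RR_N$ and $b = C\ld_N Z / t$ for a fixed $t$ below the chaos integrability threshold of $\RR_N$. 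Taking expectations, $\E[e^{t\RR_N}] \le C$ uniformly in $N$ by hypercontractivity, while the smallness $\ld_N \to 0$ forced by~\eqref{K1} renders the remaining entropy contribution $(C\ld_N/t)\E[Z\log Z]$ controllable by the leftover $\tfrac{1}{20}\E[Z]$ slack.

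The hard part is this final absorption: Orlicz duality produces an $L\log L$-type norm of $Z$ for which no a priori bound in terms of $\E[Z]$ is available. The saving feature is that the coefficient $\ld_N$ is \emph{simultaneously} small by~\eqref{K1} and paired with the uniformly exponentially integrable $\RR_N$; a careful scaling of the Young parameter $t$ (essentially tuned inversely to $\ld_N$ within the allowed range) trades the superlinear $Z\log Z$ growth for a harmless constant plus a small multiple of $\E[Z]$, closing~\eqref{H1} uniformly in $N \in \N$ and $\dr \in \Ha$.
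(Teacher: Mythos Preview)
Your argument has a genuine gap at the final absorption step, and the proposed fix via ``tuning the Young parameter $t$'' cannot work.

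The problem is structural. After your Cauchy--Schwarz bound on the cross term $\int Y_N\Dr_N\,dx$, you arrive at a pointwise estimate of the form
\[
\ld_N\|\Dr_N\|_{L^4}^4\,\ind_{\{\text{cutoff}\}} \le C\ld_* Z + C\ld_N \RR_N Z ,
\]
and you need $\sup_{\dr}\E\big[C\ld_N\RR_N Z - \tfrac{1}{20}Z\big]$ to be finite. But this supremum is $+\infty$ for every fixed $N$: the event $\{\RR_N > (10C\ld_N)^{-1}\}$ has positive probability (since $\RR_N$ lives in the second Wiener chaos and is unbounded), and an adapted drift can concentrate arbitrarily large $Z$ on (a progressively measurable approximation of) that event. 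Your Young-inequality split $t\RR_N \cdot (C\ld_N Z/t)$ cannot salvage this: for $t$ below the fixed exponential-integrability threshold of $\RR_N$, the entropy term $(C\ld_N/t)\,Z\log(C\ld_N Z/t)$ dominates $\tfrac{1}{20}Z$ once $Z$ is large enough, while taking $t\sim \ld_N^{-1}\to\infty$ destroys the uniform bound on $\E[e^{t\RR_N}]$. The underlying issue is that you have produced a \emph{random} coefficient in front of $Z$, and any such term is fatal in a variational formula where the drift is adversarial.

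The paper avoids this by never letting the random error multiply $Z$. In the case $\|\Dr_N\|_{L^2}^2 \lesssim K_N + \s_N$ it argues exactly as you do (your first two displays), but in the complementary case $\|\Dr_N\|_{L^2}^2 \lesssim |\int Y_N\Dr_N\,dx|$ it \emph{does not} use Cauchy--Schwarz. Instead it performs a dyadic frequency decomposition of $\int Y_N\Dr_N\,dx$, splitting at a scale $2^{j_0}\sim \|\Dr_N\|_{H^{d/2}}^{2/d}$ chosen pathwise, which yields
\[
\|\Dr_N\|_{L^2}^2 \lesssim \log\big(2+\|\Dr_N\|_{H^{d/2}}\big) + B_{1,N}^{1/2} + B_{2,N}
\]
with $B_{i,N}$ having uniform exponential moments. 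This bound is then fed into a \emph{different} application of Young's inequality (at the level of $\|\Dr_N\|_{L^4}^4$ itself, splitting $\ld_*^{1/2}\|\Dr_N\|_{L^2}^2$ from $\|\Dr_N\|_{H^{d/2}}^2/\|\Dr_N\|_{L^2}^2$), so that after exponentiation one obtains $e^{2\ld_*^{1/2}\|\Dr_N\|_{L^2}^2}\lesssim 1 + \|\Dr_N\|_{H^{d/2}}^2 + X_N(\o)$. The point is that the random piece $X_N$ now appears \emph{additively}, decoupled from $Z$, and can be controlled by $\sup_N\E[X_N]<\infty$; the $\|\Dr_N\|_{H^{d/2}}^2$ piece carries a small prefactor $\ld_*^{1/2}$ and is absorbed into the penalty. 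Your Cauchy--Schwarz step throws away precisely the frequency cancellation that makes this decoupling possible.
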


%

Suppose that $\ld_N$ decays at a rate of the form
 $\ld_N \les N^{-\kk}$
for some $\kk > 0$.
Then, by Sobolev's inequality, interpolation, and 
(standard) Young's inequality, we have 
\begin{align}
	\begin{split}
		\ld_N \| \Dr_N\|_{L^4}^4
		& \le C \| \Dr_N\|_{H^{\frac {d-\kk}4}}^4
		\le C' \| \Dr_N\|_{H^{\frac d2}}^\frac{2(d-\kk)}{d}
		\| \Dr_N\|_{L^2}^\frac{2(d+\kk)}{d}\\
		& \le \frac 1{10} \|\Dr_N\|_{H^\frac d2}^2 
		+ C'' \| \Dr_N\|_{L^2}^\frac{2(d+\kk)}{\kk}, 
	\end{split}
	\label{H2}
\end{align}

\noi
where the first term on the right-hand side
is then controlled by 
Lemma \ref{LEM:Dr}\,(ii).
When 
$\ld_N \sim (\log N)^{-1}$ (essentially corresponding to $\kk = 0$),
such an argument does not work, 
 exhibiting the critical nature
of our  problem.
While our argument is motivated by 
those in 
\cite[Subsection~5.6]{OOT}
and \cite[Subsection~3.2]{OOT2}, 
there is an additional difficulty
in our current problem as compared to 
those in \cite{OOT, OOT2}
in the following sense.
In \cite{OOT, OOT2}, the essential part
in estimating 
the potential energies
was reduced
to estimating $\|\Dr_N\|_{L^2}^6$; 
see \cite[(5.76)]{OOT}
and \cite[(3.21)]{OOT2}.
On the other hand, 
when $\kk = 0$, 
\eqref{H2} would give us
an infinite power of 
the $L^2$-norm of $\Dr_N$;
see the first term on the right-hand side of \eqref{H6}.

\begin{proof}[Proof of Proposition \ref{PROP:H1}]

	On $A_N := 
	\big\{|\int_{\T^d} \, : \!(Y^2_N +\Dr_N)^2\!: \, dx| 
	\leq K_N\big\}$, we have 
	\begin{align}
\|\Dr_N\|_{L^2}^2  \le K_N +\s_N  + 2 \bigg|\int_{\T^d} Y_N \Dr_N dx\bigg|, 
\label{H3}
\end{align}

	\noi
	where $\s_N \sim \log N$ is as in \eqref{sigma1}.
	First, 
	suppose that  we have
	\begin{align}
		\| \Dr_N  \|_{L^2}^2 \les K_N + \s_N. 
		\label{H4}
	\end{align}
	
	\noi
	Then, 
	from  Sobolev's inequality, interpolation,  \eqref{H3}, 
	and  \eqref{H4}
	with \eqref{sigma1} and \eqref{K1}
	followed by 
	Lemma \ref{LEM:Dr}\,(ii), we obtain
	\begin{align*}
			&  \ld_N \| \Dr_N\|_{L^4}^4
\cdot\ind_{\{|\int_{\T^d} \, : (Y^2_N +\Dr_N)^2: \, dx| 
				\leq K_N\}}
			\\ 
			& \quad  
\le C \ld_N \| \Dr_N\|_{L^2}^2 \| \Dr_N\|_{H^{\frac d2}}^2
\cdot\ind_{\{|\int_{\T^d} \, : (Y^2_N +\Dr_N)^2: \, dx| 	\leq K_N\}}\\
& \quad 	\le  C' \ld_*  \|  \Dr_N\|_{H^\frac d2 }^2
			\le  \frac{1}{10} \int_0^1 \| \dr(t) \|_{L^2_x}^2 dt , 
	\end{align*}
	
	\noi
	provided that $\ld_*$ is  sufficiently small.
	This yields \eqref{H1}
under the condition \eqref{H4}.

	In view of \eqref{H3}, it remains to consider  the case:
	\begin{align}
		\| \Dr_N  \|_{L^2}^2 \les \bigg| \int_{\T } Y_N  \Dr_N  dx \bigg|.
		\label{H5}
	\end{align}

\noi	
We note that the following argument holds
under a weaker assumption:	
	\begin{align}
		\ld_N \le \ld_* (\log N)^{-1}, \quad N \in \N.
		\label{H1b}
	\end{align}
	By applying  \eqref{Young2} in Lemma \ref{LEM:Young}, \eqref{H1b}, 
	and 
Bernstein's inequality (recall that 
	$\supp \ft \Dr_N \subset \{|n|\le N\}$), we have  
	\begin{align}
		\begin{split}
			\ld_N \| \Dr_N\|_{L^4}^4
			& \les  \ld_*^\frac{1}{2} (\log N)^{-1} \| \Dr_N\|_{L^2}^2 
			\Bigg(\ld_*^\frac{1}{2} \| \Dr_N\|_{L^2}^2 \frac{\| \Dr_N\|_{H^{\frac d2}}^2}{\| \Dr_N\|_{L^2}^2}
			\Bigg)\\
			& \le  \ld_*^\frac{1}{2} (\log N)^{-1} \| \Dr_N\|_{L^2}^2  
			e^{\ld_*^\frac{1}{2} \|\Dr_N\|_{L^2}^2}\\
			& \quad +  \ld_*^\frac{1}{2} (\log N)^{-1}
			\| \Dr_N\|_{H^{\frac d2}}^2
			\log \frac{\| \Dr_N\|_{H^{\frac d2}}^2}{\| \Dr_N\|_{L^2}^2}\\
			& \les  \ld_*^\frac{1}{2} (\log N)^{-1}   e^{2\ld_*^\frac{1}{2}  \|\Dr_N\|_{L^2}^2}
			+   \ld_*^\frac{1}{2} 
			\| \Dr_N\|_{H^{\frac d2}}^2.
		\end{split}
		\label{H6}
	\end{align}
	
	\noi
	We now claim that there exists
	a non-negative random variable $X_N(\o)$
	with 
	\begin{align}
		\sup_{N \in \N} \E[ X_N] < \infty
		\label{H6a}
	\end{align}
	such that 
	\begin{align}
		e^{ 2\ld_*^\frac{1}{2}  \|\Dr_N\|_{L^2}^2}
		\les   1+   \| \Dr_N  \|_{H^\frac d2 }^2
		+ X_N(\o).
		\label{H7}
	\end{align}
	
	\noi
	Then, 
	\eqref{H1} follows from 
	\eqref{H6}, \eqref{H7}, and 
	Lemma \ref{LEM:Dr}\,(ii), 
	provided that $\ld_*$ is sufficiently small.
	
	\medskip
	
	The remaining part of the proof
	is devoted to proving \eqref{H7}.
	We proceed as in the proof of 
	\cite[Lemma 3.6]{OOT2}
	(see also \cite[Subsection~5.6]{OOT}).
	Define the  sharp frequency projections $\{\Pi_j\}_{j \in \N}$
	by setting $\Pi_1 = \pi_2$
	and $\Pi_j = \pi_{2^j} - \pi_{2^{j-1}}$.
	We also set $\Pi_{\le j} = \sum_{k = 1}^j \Pi_k$
	and $\Pi_{> j} = \Id - \Pi_{\le j}$.
	Then,
	write  $\Dr_N $ as 
	\begin{align*}
		\Dr_N   = \sum_{j=1}^\infty (\al_j \proj_j Y_N  + w_j),
	\end{align*}
	where
	\begin{align*}
		\al_j &:=
		\begin{cases}
			\frac{\jb{\Dr_N , \proj_j Y_N }}{\|\proj_j Y_N \|_{L^2}^2},  & \text{if } \| \proj_j Y_N  \|_{L^2} \neq 0, \\
			0, & \text{otherwise},
		\end{cases}
		\qquad
		\text{and}
		\qquad  
		w_j :=
		\proj_j \Dr_N  - \al_j \proj_j Y_N .
	\end{align*}
	
	\noi
	Noting that $w_j$ is orthogonal to $\proj_j Y_N $ and $Y_N $ in $L^2(\T^d)$, 
	we have 
	\begin{align}
		\| \Dr_N  \|_{L^2}^2
		&= \sum_{j=1}^\infty \Big( \al_j^2 \| \proj_j Y_N  \|_{L^2}^2 + \| w_j \|_{L^2}^2 \Big), \notag \\
		\int_{\T^d } Y_N  \Dr_N  dx
		&= \sum_{j=1}^\infty \al_j \| \proj_j Y_N  \|_{L^2}^2.
		\label{HH2}
	\end{align}

	Given small $N \in \N$, 
	fix  a random number $j_0 \in \N$ (to be chosen later).
	Then, arguing as in the proof of Lemma~3.6 in \cite{OOT2}
	(see \cite[(3.49)-(3.51)]{OOT2}), we obtain
	\begin{align}
		\begin{split}
			\bigg| \sum_{j=1}^\infty \al_j \| \proj_j Y_N  \|_{L^2}^2 \bigg|
			\les
			\| \Dr_N  \|_{H^\frac d2}
			\| \Pi_{> j_0} Y_N \|_{H^{-\frac d2}}
			+
			\| \proj_{\le j_0} Y_N  \|_{L^2}^2.
		\end{split}
		\label{HH3}
	\end{align}

	\noi
	Since $Y_N $ is spatially homogeneous, we have
	\begin{align}
		\|\Pi_{> j_0} Y_N  \|_{H^{-\frac d2}}^2
		&=  \int_{\T^d} :\! ( \jb{\nb}^{-\frac d2}\Pi_{> j_0} Y_N  )^2 \!: dx
		+  \E \big[( \jb{\nb}^{-\frac d2}\Pi_{> j_0} Y_N  )^2 \big], 
		\label{HH4}
	\end{align}
	
	\noi
	where the last term is independent of $x \in \T^d$
	(and hence we suppressed its $x$-dependence).
	From~\eqref{P2}, we have
	\begin{align}
		\wt \s_{j_0} :=   \E \big[( \jb{\nb}^{-\frac d2}\Pi_{> j_0} Y_N  )^2 \big]
		= \sum_{|n| > 2^{j_0}} \frac{1}{\jb{n}^{2d}} 
		\sim 2^{-dj_0}.
		\label{HH5}
	\end{align}

	\noi
	Proceeding as in the proof of Lemma 2.5 in \cite{OTh}
	with Lemma \ref{LEM:ort}, we have
	\begin{align}
		\begin{split}
			\E& \bigg[ 
			\Big( \int_{\T^d} :\! ( \jb{\nb}^{-\frac d2}\Pi_{> j_0} Y_N  )^2 \!: dx \Big)^2 \bigg]\\
			%
			& = \int_{\T^d_x \times \T^d_y}
			\E\Big[  H_2(\jb{\nb}^{-\frac d2}\Pi_{> j_0} Y_N (x); \wt \s_{j_0})
			H_2( \jb{\nb}^{-\frac d2}\Pi_{> j_0} Y_N (y); \wt \s_{j_0})\Big] dx dy\\
			& = 2\int_{\T^d_x \times \T^d_y}
			\Big\{ \E\big[  \jb{\nb}^{-\frac d2}\Pi_{> j_0} Y_N (x)
			\cdot \jb{\nb}^{-\frac d2}\Pi_{> j_0} Y_N (y)\big]\Big\}^2 dx dy\\
			& =  2
			\sum_{\substack{n_1, n_2\in \Z^d\\  2^{j_0} <  |n_j|\le N}}\frac{1}{\jb{n_1}^{2d}\jb{n_2}^{2d}}
			\int_{\T^3_x \times \T^3_y}
			e_{n_1 + n_2}(x-y) dx dy\\
			& 
			\les   \sum_{|n| > 2^{j_0}}\frac{1}{\jb{n}^{4d}}
			\sim 2^{-3dj_0}.
		\end{split}
		\label{HH6}
	\end{align}

	\noi
	Now, define a non-negative random variable $B_{1, N}(\o)$ by setting
	\begin{align}
		\begin{split}
			B_{1, N}(\o) & = 
			\bigg(\sum_{k = 1}^\infty
			2^{\frac 52 d k} \Big(  \int_{\T^d } :\! (\jb{\nb}^{-\frac d2} \proj_{> k}Y_N )^2 \!: dx\Big)^2 \bigg)^\frac{1}{2}.
		\end{split}
		\label{HH7}
	\end{align}

	\noi
	From \eqref{HH4}, \eqref{HH5},  and \eqref{HH7}, we obtain
	\begin{align}
		\| \Pi_{> j_0} Y_N \|_{H^{-\frac d2}}^2
		\les  2^{-\frac {5d}4j_0}B_{1, N}(\o)+ 
		2^{-dj_0}.
		\label{HH8}
	\end{align}

	\noi
	Let us now consider the second term on the right-hand side
	of \eqref{HH3}. As in \eqref{HH4}, write 
	\begin{align}
		\| \proj_{\le j_0} Y_N  \|_{L^2}^2
		=  \int_{\T^d} :\! (\proj_{\le j_0} Y_N )^2 \!: dx
		+  \E \big[ (\proj_{\le j_0}  Y_N )^2 \big] .
		\label{HH9}
	\end{align}
	
	\noi
	We have 
	\begin{align}
		\E \big[ (\proj_{\le j_0}  Y_N )^2 \big] 
		= \sum_{|n| \le  2^{j_0}}  \frac{1}{\jb{n}^{d}}
		\sim j_0.
		\label{HH10}
	\end{align}
	
	\noi
	Proceeding as in \eqref{HH6}, we have 
	\begin{align}
		&\E \bigg[ 
		\Big( \int_{\T^d} :\! ( \Pi_{k} Y_N  )^2 \!: dx \Big)^2 \bigg]
		\les  2^{-dk}.
		\label{HH11}
	\end{align}
	
	\noi
	As in \eqref{HH7}, 
	define a non-negative random variable $B_{2, N}(\o)$ by setting
	\begin{align}
		\begin{split}
			B_{2, N}(\o) & = 
			\sum_{k = 1}^\infty
			\bigg|  \int_{\T^d } :\! ( \proj_{ k}Y_N )^2 \!: dx\bigg|.
		\end{split}
		\label{HH12}
	\end{align}
	
	\noi
	Thus, from \eqref{HH9}, \eqref{HH10}, and \eqref{HH11}, 
	we have 
	\begin{align}
			\| \proj_{\le j_0} Y_N  \|_{L^2}^2
			&\les 
			B_{2, N}(\o) + j_0.
		\label{HH13}
	\end{align}

	We now choose $j_0 = j_0(\o)$ by 
	\begin{align}
		2^{\frac d2 j_0(\o)} \sim 2 + \| \Dr_N (\o) \|_{H^\frac d2}.
		\label{HH14}
	\end{align}

	\noi
	Then, 
	putting 
	\eqref{H5},  \eqref{HH2}, 
	\eqref{HH3},   \eqref{HH8},  and \eqref{HH13} together
	with \eqref{HH14}, we have 
	\begin{align*}
			\| \Dr_N  \|_{L^2}^2
			&    \leq
			\Big( 2^{-\frac  d2 j_0} +  2^{-\frac {5d}{8} j_0} 
			B_{1, N}^\frac{1}{2}(\o)\Big) \| \Dr_N  \|_{H^\frac d2} +  
			B_{2, N}(\o)
			+ j_0\\
			&    \les
			\log\big( 2+  \| \Dr_N  \|_{H^\frac d2 }\big) +  
			B_{1, N}^\frac 12 (\o)
			+ B_{2, N}(\o).
	\end{align*}
	
	\noi
	Hence, by Young's inequality
	and choosing $\ld_*>0$ sufficiently small, we obtain
	\begin{align*}
		e^{2 \ld_*^\frac{1}{2}  \|\Dr_N\|_{L^2}^2}
		\les   1+  \| \Dr_N  \|_{H^\frac d2 }^2
		+ X_N, 
	\end{align*}

	\noi
	yielding \eqref{H7}, 
	where $X_N = X_N(\o)$ is defined by 
	\begin{align}
		X_N(\o) = e^{\g  B_{1, N}^\frac 12 (\o)
			+ \g B_{2, N}(\o)}
		\label{HH16a}
	\end{align}
	
	\noi
	with some small constant $\g > 0$, 
	which we now choose to guarantee 
	the uniform bound \eqref{H6a}.
	From \eqref{HH7}, Minkowski's integral inequality, the Wiener chaos estimate (Lemma \ref{LEM:hyp}), 
	and~\eqref{HH6} (with $j_0$ replaced by $k$), 
	we see that 
	\begin{align*}
		\E \big[B_{1, N}^p\big] \les p 
	\end{align*}
	for any finite $p \geq 1$, uniformly in $N \in \N$.
	Then, it follows from \cite[Lemma 4.5]{TzBO} that there exists $\g > 0$ such that 
	\begin{align}
		\E \big[e^{\g B_{1, N}^\frac 12 }\big] \les 1, 
		\label{HH17}
	\end{align}
	
	\noi
	uniformly in $N \in \N$.
	A similar computation applied to \eqref{HH12} yields
	\begin{align}
		\E \big[e^{\g B_{2, N}}\big] \les 1, 
		\label{HH18}
	\end{align}

	\noi
	uniformly in $N \in \N$.
	Hence, the bound \eqref{H6a} follows
	from \eqref{HH16a}, \eqref{HH17}, and \eqref{HH18}.
\end{proof}

\section{Strongly  coupling regime: non-normalizability}
\label{SEC:nonnor}

In this section, 
we present a proof of Theorem \ref{THM:1}\,(ii)
 by following closely
the argument in \cite[Section 3]{OST}, 
which 
was
in turn  is inspired by  the recent works 
by the fourth author with Weber~\cite{TW}
and by the second and fourth authors with Okamoto
\cite{OOT, OOT2}.
For this purpose, 
we first recall notations and  preliminary results from  \cite{OST}.

Let $f: \R^d \to \R$ be a real-valued Schwartz function
with $\|f\|_{L^2(\R^d, \frac{dx}{(2\pi)^d})} = 1$
such that 
its Fourier transform $\ft f$ is 
supported  on $\{\xi \in \R^d:  |\xi| \le 1 \}$ with $\ft f(0) = 0$.
Define a function $f_M$  on $\T^d$ by 
\begin{align}
	f_M =  M^{-\frac d2} \sum_{\substack{|n| \le M}} \ft f\Big( \frac nM \Big) e_n, 
	\label{fMdef} 
\end{align}

\noi
where $\ft f = \F_{\R^d}(f)$ denotes the Fourier transform on $\R^d$.

\begin{lemma}[Lemma 3.3 in \cite{OST}]
\label{LEM:leo1}
	Let   $\al > 0$. Then,  we have
	\begin{align}
		\int_{\T^d} f_M^2 dx &= 1 + O( M^{-\al}), \label{fM0} \\
 \int_{\T^d}  f_M^4  dx &
 \sim M^d,  \label{fM1}\\
\int_{\T^d} (\jb{\nabla}^{-\frac d2} f_M)^2 dx 
& \sim M^{-d}
\label{fm2} 
\end{align}
	
\noi
for any $M \gg 1$.
	
\end{lemma}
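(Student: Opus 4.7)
My plan is to apply Parseval on $\T^d$ to rewrite each spatial integral as a sum over Fourier modes, and then recognize the resulting sum as a Riemann sum of mesh $1/M$ converging to an integral on $\R^d$ involving $\ft f$. Since $\ft f \in C^\infty_c(\R^d)$, the error in such a Riemann approximation is $O(M^{-\al})$ for every $\al > 0$ (by Poisson summation applied to the smooth periodization, or equivalently many integrations by parts).

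For \eqref{fM0}, this gives $\int_{\T^d} f_M^2\, dx = M^{-d}\sum_{|n|\le M}|\ft f(n/M)|^2$, a $1/M$-mesh Riemann sum for $\int_{\R^d}|\ft f(\xi)|^2\,d\xi = 1$ (the identity being Plancherel on $\R^d$ together with the normalization of $f$), from which \eqref{fM0} follows at once. For \eqref{fm2}, the same Parseval step yields $M^{-d}\sum_{|n|\le M}|\ft f(n/M)|^2 \jb{n}^{-d}$; splitting into the regions $|n|\le M/10$ and $M/10 < |n|\le M$ (or, equivalently, reading this as a Riemann sum for $\int_{\R^d}|\ft f(\xi)|^2 \jb{M\xi}^{-d}\, d\xi$), the vanishing condition $\ft f(0)=0$, which gives $|\ft f(\xi)|^2 \les |\xi|^2$ near the origin and hence makes $\int |\ft f(\xi)|^2|\xi|^{-d}\, d\xi$ finite, produces the upper bound of order $M^{-d}$, while the positivity of $\ft f$ on some open subset of $\{|\xi|\sim 1\}$ (forced by $\|f\|_{L^2}=1$) yields the matching lower bound.

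For \eqref{fM1}, I would apply Parseval to the product $f_M^2$ to write $\int_{\T^d} f_M^4\, dx = \sum_n |(\ft f_M \ast \ft f_M)(n)|^2$, where the outer convolution is over $\Z^d$. The expression $(\ft f_M \ast \ft f_M)(n) = M^{-d}\sum_{n_1}\ft f(n_1/M)\ft f((n-n_1)/M)$ is, for each fixed $n$, a $1/M$-mesh Riemann sum for the $\R^d$-convolution $(\ft f \ast_{\R^d} \ft f)(n/M)$. Since $\ft f \ast_{\R^d}\ft f \in C^\infty_c(\R^d)$ (with support in $|\xi|\le 2$), a second Riemann-sum approximation in $n$ then yields $\sum_n |(\ft f \ast_{\R^d}\ft f)(n/M)|^2 \sim M^d \|\ft f \ast_{\R^d}\ft f\|_{L^2(\R^d)}^2$, and via the transform identity $\ft f\ast_{\R^d}\ft f = c\,\widehat{f^2}$ this equals, up to a positive constant, $M^d \|f\|_{L^4(\R^d)}^4 \sim M^d$; positivity $\|f\|_{L^4}>0$ follows from $f\in\S$ and $\|f\|_{L^2}=1$.

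The main technical point to handle carefully is the iterated Riemann-sum step in \eqref{fM1}. A cleaner alternative route is to interpret $f_M$, via Poisson summation, as (a universal constant times) the $(2\pi\Z)^d$-periodization on $\T^d$ of the Schwartz function $F_M(x) = M^{d/2} f(Mx)$ on $\R^d$ (whose Fourier transform equals exactly $M^{-d/2}\ft f(\,\cdot\,/M)$, the pre-extension of $\ft f_M$). All three $\T^d$ norms then reduce, modulo rapidly decaying periodization tails, to the corresponding $\R^d$ norms of $F_M$, which by direct change of variables give $\|F_M\|_{L^2(\R^d)}^2 = \|f\|_{L^2}^2$, $\|F_M\|_{L^4(\R^d)}^4 = M^d \|f\|_{L^4}^4$, and $\|\jb{\nabla}^{-d/2}F_M\|_{L^2(\R^d)}^2 \sim M^{-d}\int |\ft f(\zeta)|^2|\zeta|^{-d}\, d\zeta$, producing all three estimates in one stroke.
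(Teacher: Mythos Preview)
The paper does not prove this lemma at all; it is quoted verbatim as Lemma~3.3 of \cite{OST} and invoked without argument. Your proposal therefore supplies what the paper omits, and the approach you take---Parseval on $\T^d$ followed by identification of the resulting lattice sums as Riemann sums for integrals involving $\ft f$---is exactly the natural one (and is the one used in \cite{OST}). Both the direct Riemann-sum route and the Poisson-summation/periodization alternative are correct.

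A couple of small points worth tightening. In \eqref{fm2} your upper bound hinges on $\int_{\R^d} |\ft f(\xi)|^2 |\xi|^{-d}\, d\xi < \infty$; this is indeed finite because $\ft f(0)=0$ and $\ft f$ smooth give $|\ft f(\xi)|^2 \les |\xi|^2$ near the origin, so the integrand behaves like $|\xi|^{2-d}$ there, which is locally integrable in every dimension $d \ge 1$. For the lower bound you wrote ``positivity of $\ft f$''; you really only need $|\ft f| \ge c > 0$ on some open set bounded away from the origin, which follows from continuity, $\ft f(0)=0$, and $\ft f \not\equiv 0$. In \eqref{fM1} the iterated Riemann-sum argument is fine because the inner approximation $(\ft f_M * \ft f_M)(n) = (\ft f *_{\R^d} \ft f)(n/M) + O(M^{-\al})$ holds uniformly in $n$ (the error bound from Poisson summation depends only on derivative bounds of the integrand, not on the evaluation point), and the outer sum has only $O(M^d)$ nonzero terms; alternatively your periodization argument handles all three estimates cleanly in one pass.
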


We also  define $Q(u$) by 
\begin{align}
Q(u) =  \int_{\T^d}  u^4 dx.
\label{Q1}
\end{align}

\noi
As in \cite{OST}, 
the divergence of $Q(f_M)$
(see \eqref{fM1}) is what allows us to prove \eqref{Gibbs3}.

In the following, we split
the proof of Theorem \ref{THM:1}\,(ii) 
into two cases, 
depending on 
whether $\sup_{N\in \N}(\log N)^{-1} K_N<\infty$ or not.
In the former case, 
 we use exactly the
same drift $\dr^0$ as in~\cite{OST}, 
which we recall now.
%
As seen in \cite{OOT, OST, OOT2}, 
the main idea is to choose a drift $\dr_N$
in applying 
Lemma~\ref{LEM:var}
(with $F = \ld_N R_N$)
such that 
$\Dr_N
= \int_0^t \jb{\nabla}^{-\frac d2} \dr_N(t') dt'$  is of the form: 
\begin{align}
\text{``}\,\Dr_N = - Y_N + \text{a deterministic perturbation } h_N\,\text{''}
\label{Q1a}
\end{align}

\noi
such that $h_N$ has a bounded $L^2$-norm
but has a large $L^4$-norm
(see \eqref{fM0} and \eqref{fM1} above)
which dominates the last term in \eqref{var1}, yielding 
the desired divergence.
The issue is that $-Y_N = - \pi_N Y$ 
defined in \eqref{P2}
is a Brownian motion in time and thus is not differentiable in time.
Hence, we need to introduce a suitable   approximation 
to $Y_N$ in \eqref{Q1a}.

In \cite[Lemma 3.4]{OST}, 
given $M \gg 1$, we constructed 
the approximation process 
 $\z_M$ to $Y$ in~\eqref{P2}
by solving 
the stochastic  differential equation \cite[(3.18)]{OST}
on low frequencies  $\{|n| \leq M\}$
and 
setting 
 $\ft \z_{M}(n, t)  \equiv 0$ on high frequencies  $\{|n| > M\}$.
In \cite[(3.27)]{OST}, 
we then defined a drift $\dr^0$ by setting
\begin{align*}
	\dr^0 (t) 
	& = \jb{\nb}^{\frac d2} \bigg( -\frac{d}{dt} \z_M(t) + \sqrt{ \al_{M, N}} f_M \bigg), 
\end{align*}

\noi
where $\al_{M, N}$ is as in \cite[(3.25)]{OST}.
We note from \cite[(3.23)]{OST} that 
$\jb{\nb}^{\frac d2} \frac{d}{dt} \z_M
\in L^2([0,1]; L^2(\T^d))$, $\PP$-almost surely, 
and thus 
$ \dr^0\in\mathbb H_a$.
As in \eqref{Dr1}, 
we then set 
\begin{align*}
\Dr^0 
= \int_0^1 \jb{\nb}^{-\frac d2} \dr^0(t) \, dt = - \z_M + \sqrt{ \al_{M, N}} f_M.
\end{align*}

\noi
From \cite[(3.26) and (3.34)]{OST}
and the frequency supports of $f_M$ and $\z_M$, 
 we have 
\begin{align}\label{Q4}
\alpha_{M,N}& =\s_M(1+o(1))\sim \log M,\\
\E\big[\| \Dr^{0}\|^2_{H^{\frac{d}{2}}}\big]
& \le \E\bigg[\int_0^1 \| \dr^{0}(t)\|_{L^2_x} ^2 dt\bigg]
\les M^{d} \log M ,
\label{Q6}\\
\pi_N \Dr^{0}& =\Dr^{0}_N=\Dr^{0}
\notag 
\end{align}

\noi
for any $N\ge M\gg 1$.
On the other hand, from 
\eqref{Q1}, 
\eqref{fM1},  \eqref{Q4}, 
and \eqref{K2}, 
we have
\begin{align}
\ld_N Q(\Dr^0)
\sim \ld_N   M^d
(\log M)^2.
\label{Q7}
\end{align}

\noi
By setting $M = N$, 
it follows from \eqref{Q6} and \eqref{Q7} with \eqref{K2}
that 
$\ld_N Q(\Dr^0)$ dominates the last term 
in \eqref{var1}
which is the  main obstruction
in achieving the desired divergence.

%
%
%

The following lemma follows from a slight modification of the proof of \cite[(3.42)]{OST}.

\begin{lemma}\label{LEM:pa5}
Let $\{K_N\}_{N\in \N}$ be a non-decreasing sequence
of positive numbers.
There exists  $M_0=M_0(K_1) \in \N$ such that
	\begin{align*}
		\PP\bigg( \Big|\int_{\T^d} (:{Y_N^2}: + 2 Y_N \Dr^{0} + (\Dr^{0})^2) dx \Big| \le K_N \bigg) 
		\ge \frac 12
	\end{align*}
	\noi
for any $N\ge M\ge M_0(K_1)$.
\end{lemma}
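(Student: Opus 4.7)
The plan is to follow the argument in \cite[proof of (3.42)]{OST} almost verbatim and to exploit the monotonicity of $\{K_N\}_{N\in\N}$ in order to reduce to a single-threshold statement. Write
\[
X_N := \int_{\T^d}\bigl( :\!Y_N^2\!: \,+\, 2\, Y_N \Dr^{0} + (\Dr^{0})^2 \bigr)\, dx.
\]
Since $\{K_N\}$ is non-decreasing, $K_N \ge K_1$ for every $N$, so $\{|X_N|\le K_1\} \subset \{|X_N|\le K_N\}$, and it is enough to show $\PP(|X_N|\le K_1)\ge 1/2$ for every $N\ge M\ge M_0(K_1)$. I would derive this from Chebyshev's inequality by showing $\E[X_N^2]\le K_1^2/2$ once $M$ is chosen large enough in terms of $K_1$ alone.

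To estimate the second moment, I would substitute $\Dr^{0}=-\z_M+\sqrt{\alpha_{M,N}} f_M$ and expand $X_N$ into six summands. Each random summand lies in a finite Wiener chaos, so Lemma \ref{LEM:hyp} combined with Lemma \ref{LEM:ort} reduces the bound to pairwise covariance computations that use the kernel $\jb{n}^{-d}$ and the fact that both $\z_M$ and $f_M$ are supported on frequencies $\{|n|\le M\}$. The crucial input is cancellation of the means: the SDE construction of $\z_M$ in \cite[Lemma 3.4]{OST} will give $\E\bigl[\int Y_N \z_M\, dx\bigr]=\E\bigl[\int \z_M^2\, dx\bigr]=\sigma_M(1+o(1))$, while $\alpha_{M,N}\int_{\T^d}f_M^2\,dx=\sigma_M(1+o(1))$ by \eqref{Q4} and \eqref{fM0}. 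The Wick piece $\int :\!Y_N^2\!:\,dx$ and the mixed terms involving $Y_N f_M$ and $\z_M f_M$ have mean zero. Adding up gives
\[
\E[X_N] = -2\sigma_M + \sigma_M + \sigma_M + o(1) = o(1) \qquad \text{as } M\to\infty.
\]

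An analogous but more tedious pairwise covariance computation, matching \cite[(3.40)--(3.41)]{OST}, will show that $\mathrm{Var}(X_N) = o(1)$ as $M\to\infty$, uniformly in $N\ge M$. Choosing $M_0(K_1)$ large enough to ensure $\E[X_N^2]\le K_1^2/2$ for $M\ge M_0(K_1)$ and applying Chebyshev then completes the argument. The hard part will be the bookkeeping of these Wick-type cancellations among $\int :\!Y_N^2\!:\,dx$, $\int Y_N\z_M\, dx$, $\int \z_M^2\,dx$, and $\alpha_{M,N}\int f_M^2\,dx$. Aside from that, the only new element compared with \cite{OST} is the $N$-dependent threshold $K_N$, and this is absorbed entirely by the trivial bound $K_N\ge K_1$.
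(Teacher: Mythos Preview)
Your proposal is correct and matches the paper's approach: the paper simply states that the lemma ``follows from a slight modification of the proof of \cite[(3.42)]{OST}'', and your modification---replacing the fixed cutoff $K$ by $K_N$ and absorbing the $N$-dependence via $K_N\ge K_1$---is exactly that slight modification. One small caution: your displayed computation $\E[X_N]=-2\sigma_M+\sigma_M+\sigma_M+o(1)$ only yields $o(\sigma_M)$ from the asymptotic \eqref{Q4} alone; the genuine $o(1)$ (needed so that $M_0$ can depend on an arbitrarily small $K_1$) comes from the precise choice of $\alpha_{M,N}$ in \cite[(3.25)]{OST}, which is tuned exactly for this mean cancellation, so be sure to invoke that definition rather than just \eqref{Q4} when you write out the details.
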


\medskip

We now present a proof of 
Theorem \ref{THM:1}\,(ii).

\begin{proof}[Proof of Theorem \ref{THM:1}\,(ii)]

We only prove the divergence in \eqref{Gibbs3}, 
since the claimed non-convergence then follows from Lemma \ref{LEM:conv0}.
Noting that
\begin{align*}
& \E_\mu\Big[\exp\big(\min (  \ld_N R_N(u),L) \big)
\cdot \ind_{\{ |\int_{\T^d} \, : (\pi_N u)^2 :\, dx | \le K_N\}}   \Big] \\
& \quad \ge 
\E_\mu\Big[\exp\big(\min ( \ld_N  R_N(u),L)
\cdot \ind_{\{ |\int_{\T^d} \, : (\pi_N u)^2 :\, dx | \le K_N\}}\big)   \Big] -1,  
\end{align*}

\noi
it suffices to prove 
\begin{align}
\liminf_{N\to \infty} \lim_{L\to \infty}  \E_\mu\Big[\exp\big(\min{(  \ld_N  R_N(u),L)} 
\cdot \ind_{\{ |\int_{\T^d} \, : (\pi_N u)^2 :\, dx | \le K_N\}}\big)   \Big] =  \infty.
\label{pa0}
\end{align}

\noi
We split the proof into two cases.

\medskip

\noi
$\bullet$ {\bf Case 1:}
$\sup_{N\in \N}(\log N)^{-1} K_N<\infty$.

In this case, from \eqref{K2}, we have 
\begin{align}\label{A0}
	\ld_N\ge C\ld^{*}(\log N)^{-1}, \quad N \in \N,
\end{align}

\noi
for some $C>0$.
In the following, we take $N \ge M \gg 1$.

From \eqref{Y0bis} (with $\Dr_N$ replaced by $\Dr^0$) and Lemma \ref{LEM:Dr1},
we have 
\begin{align}
\begin{split}
\ld_N R_N(Y + \Dr^{0})
&\ge (1-\dl) \ld_N Q(\Dr^{0}) \\
			& \quad
			- c(\dl)\ld_N\Big(
			\|:\! Y_N^3 \!:\|_{W^{-\eps,\infty}}^2
			+\|:\! Y_N^2 \!:\|_{W^{-\eps,\infty}}^4 
			+\|Y_N\|_{W^{-\eps,\infty}}^{c_0}
			\Big) \\
			&\quad
			- \ld_N |R_N(Y)|
			-\dl \ld_N \|\Dr^0\|_{H^\frac d2}^2.
		\end{split}
		\label{A1}
	\end{align}
	
\noi
See \cite[(3.41)]{OST}.
Proceeding as in 
 \cite[(3.31)-(3.33)]{OST}
 with  \eqref{Q4},  Lemma \ref{LEM:leo1}
 (see also \eqref{Q7}),  and  Lemma \ref{LEM:pa5}, 
we have  
 \begin{align}
\begin{split}
\E &\Big[\min{((1-\dl) \ld_N Q(\Dr^{0}),L)}\cdot 
		\ind_{\{|\int_{\T^d} \, 
			:(Y_N+\Dr^{0})^2: \, dx| \,
			\leq K_N\}}\Big]\\
& \ge  C\lambda_N \alpha^{2}_{M,N} M^d
-C(\delta) \ld_N\alpha_{M,N}\E[\z^2_M]\|f_M\|^2_{L^2} \\
& \ge C'\lambda_N M^d (\log M)^2-C''(\delta)\ld_N(\log M)^2.
\end{split}
\label{A2}
\end{align}

\noi
for any small $\dl > 0$
and 
$N \ge M \gg 1$, 
provided that 
 $L\gg 
 \ld_N \alpha^{2}_{M,N}Q(f_M)\sim 
 \lambda_N \alpha^{2}_{M,N}M^d $.
Thus, it follows
from  
the variational formula 
(Lemma \ref{LEM:var}), 
\eqref{A1}, \eqref{A2}, 
Lemmas \ref{LEM:Dr} and \ref{LEM:conv0},
and 
\eqref{Q6}
that 
\begin{align}\label{A3}
\begin{split}
& \log  {\E_\mu \Big[\exp\big(\min{(  \ld_N R_N(u),L)} \cdot \ind_{\{ |\int_{\T^d} \, : u_N^2 :\, dx | \le K_N\}}\big)  
 \Big]}\\
& \quad \ge \E\bigg[
\min{(\ld_N  R_N(Y+\Dr^{0}),L)}\cdot 
\ind_{\{|\int_{\T^d} \, 
				:(Y_N+\Dr^{0})^2: \, dx| \,
				\leq K_N\}}
			-\frac 12 \int_0^1 \| \dr^{0}(t)\|_{L^2_x} ^2 dt
			\bigg]\\
& \quad \ges \lambda_N M^d (\log M)^2
-C(\delta)\ld_N(\log M)^2
- C' M^d \log M
-C''(\dl)
\end{split}
\end{align}

\noi
\noi
for any small $\dl > 0$
and 
$N \ge M \gg 1$, 
provided $L\gg \ld_N \alpha^{2}_{M,N} M^d.$
In view of \eqref{A0}, 
by setting $M = N$ in \eqref{A3}, 
 taking $L\to \infty$, 
and then $N \to \infty$, 
 \eqref{pa0} follows from \eqref{A3}, 
provided that $\ld^*$ is sufficiently large.

\smallskip

\noi
$\bullet$ {\bf Case 2:}
$\sup_{N\in \N}(\log N)^{-1} K_N = \infty$.

In this case, from \eqref{K2}, we have 
\begin{align}
	\ld_N\ge C\ld^{*}K_N^{-1}, \quad N \in \N,
\label{B1}
\end{align}
for some $C>0$.
In this case, we have  $K_N \gg \int_{\T^d} Y_N^2(\o) dx  \sim C(\o)  \log N$.
Namely, we can   take a drift $\ta$ 
such that $\Dr$  defined in \eqref{Dr1} 
has a much larger   $L^2$-norm than that of $Y_N$.
This,  in particular, allows
us to choose a {\it deterministic} drift
such that 
$\ld_N Q(\Dr_N) \gg  \|\Dr_N\|_{H^\frac d2}^2$
to drive the desired divergence;
see \eqref{B5} and \eqref{B5a}
(with $M = N$).
%
%

Given  $M\gg 1$,
let  $f_M$ be 
as in \eqref{fMdef} and 
we define a drift $\dr_\g$ by 
\begin{align}
	\dr_{\gamma} (t) 
	& =\sqrt{\gamma K_M}\, \jb{\nb}^{\frac d2} f_M,
\label{B2}
\end{align}

\noi
for some small $\g > 0$ is independent of $M$ (to be chosen later). 
We then set 
\begin{align}
\Dr_\gamma 
= \int_0^1 \jb{\nb}^{-\frac d2} \dr_\gamma(t) \, dt = \sqrt{\gamma K_M} f_M.
\label{B3}
\end{align}

\noi
From the frequency support of $f_M$, we have 
\begin{align*}
	\pi_N \Dr_\gamma = \Dr_{\gamma}
\end{align*}

\noi
for any  $N\ge M\gg  1$.
From  \eqref{B3}, Cauchy-Schwarz's inequality (in time), 
\eqref{B2}, Bernstein's inequality
(with $\supp \ft f_M \subset \{ |n|\le M\}$), 
and    Lemma \ref{LEM:leo1}, we have 
\begin{align}\label{B5}
\| \Dr_\gamma\|_{H^\frac d2}^2
\le
\int^{1}_{0}\|\dr_\gamma(t)\|^2_{L_x^2}dt 
\les \gamma K_M M^d
\end{align}

\noi
for any $M \gg1 $.
From  \eqref{Q1} and Lemma \ref{LEM:leo1}, we also have 
\begin{align}
Q(\Dr_\g) 
\sim \g^2 K_M^2 M^d
\label{B5a}
\end{align}

\noi
for any $M \gg1 $.

We claim that by choosing  $\g> 0$ sufficiently small, we have 
\begin{align}
\PP\bigg( \Big|\int_{\T^d} :{Y_N^2}: + 2 Y_N \Dr_\gamma + \Dr_\gamma^2 \, dx \Big| \le K_N \bigg)
	\ge \frac 12 
\label{B6}
\end{align}

\noi
for any $N =  M \gg 1$.
Then, 
from  
the variational formula 
(Lemma \ref{LEM:var}), 
\eqref{A1}
(with $\Dr^0$ replaced with $\Dr_\g$), 
\eqref{B5}, \eqref{B5a}, 
and Lemmas \ref{LEM:Dr} and \ref{LEM:conv0},
we have 
\begin{align}
\begin{split}
& \log  {\E_\mu \Big[\exp\big(\min{(  \ld_N R_N(u),L)} \cdot \ind_{\{ |\int_{\T^d} \, : u_N^2 :\, dx | \le K_N\}}\big) 
 \Big]}\\
& \quad   \ge \E\bigg[\min{(\ld_N  R_N(Y+\Dr_\g),L)}\cdot 
\ind_{\{|\int_{\T^d} \, 
				:(Y_N+\Dr_\g)^2: \, dx| \,
				\leq K_N\}}
			-\frac 12 \int_0^1 \| \dr_\g(t)\|_{L^2_x} ^2 dt
			\bigg]\\
&\quad \ges  \ld_N \g^2 K^2_M M^d
- C\g K_M M^d
- C'(\delta)
\end{split}
\label{B7}
\end{align}

\noi
for any small $\dl > 0$
and 
$N =  M \gg 1$, 
provided 
$L\gg \ld_N \g^2 K^2_M M^d$.
In view of \eqref{B1}, 
by setting $M = N$ in \eqref{B7}, 
 taking $L\to \infty$, 
and then $N \to \infty$, 
 \eqref{pa0} follows from \eqref{B7}, 
provided that $\ld^* = \ld^*(\g)$ is sufficiently large.

It remains to prove \eqref{B6}.
From Lemma \ref{LEM:Dr}, we have 
\begin{align}
\begin{split}
\E& \bigg[\Big|\int_{\T^d} :{Y_N^2}: + 2 Y_N \Dr_\gamma + (\Dr_\gamma)^2 dx\Big|^2\bigg] \\
& \les
1+  \E\bigg[\Big|\int_{\T^d}Y_N \Dr_\gamma\, dx \Big|^2\bigg]
+\E\bigg[\Big(\int_{\T^d}\Dr^2_\gamma \,dx\Big)^2\bigg]\\
\end{split}
\label{B8}
\end{align}

\noi
From 
 \eqref{B3}, \eqref{P2},  and \eqref{fm2} in Lemma \ref{LEM:leo1}, we have
\begin{align}\label{B9}
\begin{split}
\E\bigg[\Big|\int_{\T^d}Y_N \Dr_\gamma\, dx \Big|^2\bigg]
& =\g K_M \E\bigg[\Big|\int_{\T^d}Y_N f_M dx \Big|^2\bigg] =\g K_M \sum_{|n|\le M} \jb{n}^{-d}|\widehat{f}_M(n)|^2\\
& \les \g K_M M^{-d}.
\end{split}
\end{align}

\noi
From \eqref{B3} and  \eqref{fM0}, we have 
\begin{align}\label{B10}
\E\bigg[\Big(\int_{\T^d}\Dr^2_\gamma \,dx\Big)^2\bigg]
= \gamma^2 K^2_M\|f_M\|^4_{L^2}
\sim \gamma^2 K^2_M 
\end{align}

\noi
for any $M \gg 1$.
Hence, by applying  Chebyshev's inequality, \eqref{B8}, 
 \eqref{B9}, and  \eqref{B10}, we obtain
\begin{align}
\begin{split}
\PP & \bigg( \Big|\int_{\T^d} (:{Y_N^2}: + 2 Y_N \Dr_\gamma + (\Dr_\gamma)^2) dx \Big| \ge K_N \bigg)\\
& \les \frac{1}{K^2_N}+ \g \frac{K_M}{M^d K^2_N}+ \g ^2 \frac{K^2_M}{K^2_N}.
\end{split}
\label{B11}
\end{align}

\noi
Finally, 
recalling that $K_N \ges \log N \to \infty$ as $N \to \infty$, 
the bound  
\eqref{B6} follows from 
setting  $M=N$ sufficiently large and choosing $\g> 0$ sufficiently small
in \eqref{B11}.
\end{proof}

\begin{ackno} \rm
 The authors would like to thank an anonymous referee for the helpful comments which have improved the presentation of the paper.
D.G. and T.O.~were supported by the European Research Council (grant no.~864138 ``SingStochDispDyn"). 
T.O.~was also supported 
 by the EPSRC 
Mathematical Sciences
Small Grant  (grant no.~EP/Y033507/1).
\end{ackno}

\end{document}